\documentclass[10pt,leqno]{amsart}
\usepackage{amsmath, amsthm, xcolor, graphicx, amssymb, mathtools, chngcntr, graphicx,  hyperref}

\newcounter{example}[section]
\newenvironment{example}[1][]{\refstepcounter{example}\par\medskip
	\noindent \textbf{Example~\theexample. #1} \rmfamily}{\medskip}

\newcommand{\be}{\begin{equation}}
	\newcommand{\ee}{\end{equation}}
\newcommand{\ben}{\begin{eqnarray*}}
	\newcommand{\een}{\end{eqnarray*}}

\newtheorem{theorem}{Theorem}[section]
\newtheorem{lemma}[theorem]{Lemma}
\newtheorem{corollary}[theorem]{Corollary}
\newtheorem{proposition}[theorem]{Proposition}
\newtheorem{remark}[theorem]{Remark}
\newtheorem{conjecture}[theorem]{Conjecture}
\newtheorem{definition}[theorem]{Definition}

\newcommand{\bt}{\begin{theorem}}
	\newcommand{\et}{\end{theorem}}
\newcommand{\bl}{\begin{lemma}}
	\newcommand{\el}{\end{lemma}}
\newcommand{\bcl}{\begin{corollary}}
	\newcommand{\ecl}{\end{corollary}}
\newcommand{\bex}{\begin{example}}
	\newcommand{\eex}{\end{example}}
\newcommand{\brem}{\begin{remark}}
	\newcommand{\erem}{\end{remark}}
\newcommand{\bed}{\begin{definition}}
	\newcommand{\eed}{\end{definition}}

\numberwithin{equation}{section}
\numberwithin{theorem}{section}

\renewcommand{\theexample}{\thesection.\arabic{example}}

\begin{document}
\title{On semimonotone matrices of exact order two}
	
	\author[B. P. Chauhan]{Bharat Pratap Chauhan}
	\address{(B. P. Chauhan) Department of Mathematics, Indian Institute of Technology Gandhinagar, Gujarat 382355, India}
	\email{bharat024pratap@gmail.com; bc102@snu.edu.in}
	
	\author[D. Dubey]{Dipti Dubey}
	\address{(D. Dubey) Department of Mathematics, Shiv Nadar Institution of Eminence, Delhi NCR 201314, India}
	\email{dipti.dubey@snu.edu.in}
	
	\maketitle
	
	\let\thefootnote\relax
	\footnotetext{MSC2020: 15A15, 15A18, 15B48.}
	\footnotetext{Keywords: Semimonotone matrices, Almost semimonotone matrices,
		Copositive matrices, $\mathbf{Z}$-matrices, Inverse $\mathbf{Z}$-matrices}
	
	\begin{abstract}
In this paper, we introduce the notion of (strictly) semimonotone matrices of exact order $k$, where \( 0\leq k\leq n \), and explore their properties. We fully characterize the $3 \times 3$ (strictly) semimonotone matrices of exact order \( 2 \), and show that the class of $3 \times 3$  semimonotone matrices of exact order \( 2 \) forms a subclass of inverse $\mathbf{Z}$-matrices. We further investigate $ n\times n$ (strictly) semimonotone matrices of exact order $2$, with emphasis on their identification and construction, and establish that every $n\times n$ semimonotone $\mathbf{Z}$-matrix of exact order $2$ is invertible. Additionally, we show that when $n-k=1$, the class of (strictly) semimonotone matrices of exact order $k$ is a subclass of $\mathbf{Z}$-matrices.
	\end{abstract} 

	\noindent 
	
	\section{Introduction}
	
	Semimonotone matrices, defined as real square matrices for which the operation $Ax$ does not negate all positive entries of any nonzero, entrywise nonnegative vector $x$, are well studied (see \cite{cottle2009linear, Eav, Kar,tsatsomeros2019semimonotone}). Strictly semimonotone matrices ensure that $Ax$ neither negates nor nullifies all positive entries of such vectors. In addition to their pivotal role in linear complementarity theory \cite{cottle2009linear}, (strictly) semimonotone matrices are significant as they contain several well-known matrix classes, including (positive) nonnegative, $\mathbf{P_0}$ ($\mathbf{P}$), and (strictly) copositive matrices.
	
	The challenges associated with constructing and detecting semimonotone and strictly semimonotone matrices lead to exploring almost (strictly) semimonotone matrices. Tsatsomeros and Wendler introduced the class of almost (strictly) semimonotone matrices in \cite{tsatsomeros2019semimonotone, wendler2019almost}. An almost (strictly) semimonotone matrix $A$ is a matrix which is not (strictly) semimonotone but whose proper principal submatrices are (strictly) semimonotone. Apart from playing a crucial role in finding a new characterization of (strictly) semimonotone matrices, almost semimonotone matrices possess several interesting matrix-theoretic properties (see \cite{tsatsomeros2019semimonotone,wendler2019almost, chauhan2023almost,chauhan2024almost}). Notably, the inverse of an almost semimonotone matrix is a nonpositive matrix which is a $\mathbf{Z}$-matrix (a real square matrix is called a $\mathbf{Z}$-matrix if its off-diagonal entries are all nonpositive). Nonsingular matrices whose inverses are $\mathbf{Z}$-matrices are called inverse $\mathbf{Z}$-matrices, and the problem of determining which nonsingular matrices have inverses that are $\mathbf{Z}$-matrices is called the inverse $\mathbf{Z}$-matrix problem \cite{nabben}. So, almost semimonotone matrices form a subclass of inverse $\mathbf{Z}$-matrices. Since the inverse of a \( \mathbf{Z} \cap \mathbf{P} \)-matrix is always nonnegative (see \cite{Fid}), it follows that the inverse of a general \( \mathbf{Z} \)-matrix need not be a \( \mathbf{Z} \)-matrix. It is therefore interesting to determine for which classes of \( \mathbf{Z} \)-matrices the inverse retains the \( \mathbf{Z} \)-matrix property. 
	
	Similarly to the notion of a (strictly) copositive matrix of exact order $k$, $0 \leq k \leq n$ (see \cite{Val}), with the motivation to find a new characterization of (strictly) semimonotone matrices, we introduce the notion of (strictly) semimonotone matrices of exact order $k$, $0 \leq k \leq n$, which generalizes semimonotone and almost semimonotone matrices. We characterize $3\times 3$ semimonotone matrices of exact order $2$ and show that they are precisely the nonsingular $\mathbf{Z}$-matrices whose inverse is also a $\mathbf{Z}$-matrix. We extend this study to $n\times n$ semimonotone matrices of exact order $2$ and prove certain interesting properties. Finally, we explore semimonotone matrices of general exact order $k$ and show that, whenever $n-k=1$, they form a subclass of $\mathbf{Z}$-matrices.
	
	This paper is organized as follows. Section~\ref{preliminaries} introduces relevant definitions and notations. Section~\ref{mainsectioneo2} presents the semimonotone matrices of exact order $2$ and discusses the matrix-theoretic properties of these matrices. Section~\ref{generalresult} explores semimonotone matrices of exact order $k$, $2<k<n$.
	
	\section{Preliminaries}\label{preliminaries}
	We consider matrices and vectors with real entries. For any matrix \( A \in \mathbb{R}^{m \times n} \), \( a_{ij} \) denotes its \( i \)th row and \( j \)th column entry. We use \( A^{T} \) to denote the transpose of \( A \), \( \mathrm{tr}(A) \) for the trace of \( A \), \( \mathrm{adj}(A) \) for the adjugate matrix of \( A \), and \( \det A \) for the determinant of \( A \).
	
	For any index set \( \alpha \subseteq \{1,\ldots,n\} \), let \( \bar{\alpha} \) denote its complement in \( \{1,\ldots,n\} \). If \( A \) is a matrix of order \( n \), and \( \alpha, \beta \subseteq \{1,\ldots,n\} \), then \( A_{\alpha \beta} \) denotes the submatrix of \( A \) formed by selecting rows indexed by \( \alpha \) and columns indexed by \( \beta \). If \( \alpha = \beta \), then \( A_{\alpha \alpha} \) is called a \textit{principal submatrix} of \( A \), and \( \det A_{\alpha \alpha} \) is referred to as a \textit{principal minor} of \( A \).
	
	Let \( \alpha \subseteq \{1, \ldots, n\} \), and suppose \( A \in \mathbb{R}^{n \times n} \) is partitioned as
	\[
	A =
	\begin{bmatrix}
		A_{\alpha\alpha} & A_{\alpha\bar{\alpha}} \\
		A_{\bar{\alpha}\alpha} & A_{\bar{\alpha}\bar{\alpha}}
	\end{bmatrix}.
	\]
	If \( A_{\alpha \alpha} \) is invertible, then the \textit{Schur complement} of \( A_{\alpha \alpha} \) in \( A \), relative to the index set \( \alpha \), is defined as
	\[
	A / A_{\alpha \alpha} = A_{\bar{\alpha} \bar{\alpha}} - A_{\bar{\alpha} \alpha} A_{\alpha \alpha}^{-1} A_{\alpha \bar{\alpha}}.
	\]
	
	A matrix \( A \in \mathbb{R}^{n \times n} \) can be categorized based on various properties:
	
	\smallskip
	
	\( \mathbf{P_0} \), \( \mathbf{P} \), and \( \mathbf{Z} \)-matrices: A matrix \( A \) is a \( \mathbf{P_0} \)-matrix (resp., \( \mathbf{P} \)-matrix) if all principal minors of \( A \) are nonnegative (resp., positive). A matrix is called a \( \mathbf{Z} \)-matrix if all its off-diagonal entries are nonpositive.
	
	\smallskip
	
	hidden $\mathbf{Z}$-matrix: A matrix $A\in \mathbb{R}^{n\times n}$ is said to be a hidden $\mathbf{Z}$-matrix if there exist $\mathbf{Z}$-matrices X, Y $\in \mathbb{R}^{n\times n}$ such that  $AX = Y$ and $r^{T} X + s^{T} Y > 0$ for some  $r, s \in \mathbb{R}^{n}_{+}$.
	
	\smallskip
	
	Copositive and strictly copositive matrices: A matrix \( A \in \mathbb{R}^{n \times n} \) is called \textit{copositive} if \( x^{T} A x \geq 0 \) for all \( x \geq 0 \), and \textit{strictly copositive} if \( x^{T} A x > 0 \) for all \( x \geq 0 \), \( x \neq 0 \).
	
	\smallskip
	
	Semimonotone, strictly semimonotone, and almost (strictly) semimonotone matrices: A matrix \( A \in \mathbb{R}^{n \times n} \) is \textit{semimonotone} (an \( \mathbf{E_0} \)-matrix) if for every \( 0 \neq x \geq 0 \), there exists \( i \) such that \( x_i > 0 \) and \( (Ax)_i \geq 0 \). It is \textit{strictly semimonotone} (an \( \mathbf{E} \)-matrix) if \( (Ax)_i > 0 \) under the same condition. A matrix is \textit{almost (strictly) semimonotone} if all proper principal submatrices of \( A \) are (strictly) semimonotone and there exists a vector \( x > 0 \) such that \( Ax \leq 0 \) (resp., \( Ax < 0 \)).
	
	\smallskip
	
	Given a matrix $A\in \mathbb{R}^{n\times n}$ and a vector $q\in \mathbb{R}^{n},$ we define the {\it feasible set} of the LCP $(q,A)$ to be the set  FEA$(q,A)$ $=\{z\in\mathbb{R}^{n} \;|\;z\geq 0,\;q+Az\geq 0\}$ and the {\it solution set} of LCP $(q,A)$ by SOL$(q,A)=\{z\in FEA(q,A)\;|\;z^{T}(q+Az)$ $=0\}.$ A matrix $A\in \mathbb{R}^{n\times n}$  is called a {\it $\mathbf{Q}$-matrix} if for every $q\in \mathbb{R}^{n},$ LCP $(q,A)$ has a solution. We say that a matrix $A\in \mathbb{R}^{n\times n}$  is  a {\it $\mathbf{Q_{0}}$-matrix} if for any $q\in \mathbb{R}^{n},$ LCP $(q,A)$ has a feasible solution implies that LCP $(q,A)$ has a solution.
	
	\smallskip
	
	The following known results concerning almost semimonotone matrices will be used later.
	
	\begin{theorem}[\cite{wendler2019almost}, Theorem 3.3]\label{preliminary:theo3.3}
		A matrix $A \in \mathbb{R}^{2 \times 2}$ is almost (strictly) semimonotone if and only if $A = \begin{bmatrix} a & -b \\ -c & d \end{bmatrix}$ where $b, c > 0$, $a, d \geq 0$ (resp., $a, d > 0$), and $\det A < 0$ (resp., $\det A \leq 0$).
	\end{theorem}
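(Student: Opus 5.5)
The statement concerns $2\times 2$ matrices, so the plan is to work directly from the definitions: a $1\times 1$ matrix $[t]$ is semimonotone iff $t\ge 0$ and strictly semimonotone iff $t>0$. Hence ``all proper principal submatrices (strictly) semimonotone'' is exactly $a,d\ge 0$ (resp.\ $a,d>0$), where $A=\begin{bmatrix} a & -b\\ -c & d\end{bmatrix}$ with $b,c$ real for now. What remains is to show that existence of $x>0$ with $Ax\le 0$ (resp.\ $Ax<0$) is equivalent, under these diagonal conditions, to $b,c>0$ and $\det A<0$ (resp.\ $\det A\le 0$).

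\emph{Forward direction (non-strict case).} Take $x=(x_1,x_2)>0$ with $ax_1-bx_2\le 0$ and $-cx_1+dx_2\le 0$. Then $bx_2\ge ax_1\ge 0$ and $cx_1\ge dx_2\ge 0$, so $b,c\ge 0$.

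\emph{Ruling out zero off-diagonals.} If $b=0$, then $ax_1\le 0$ forces $a=0$. In that case $e_1$ satisfies $(Ae_1)_1=0\ge 0$, so it does not itself violate semimonotonicity. I expect this degenerate case to be the main obstacle. If the almost semimonotone definition really entails that $A$ is not semimonotone, then a matrix with $a=b=0$ and $d\ge0$ is easily checked to be semimonotone, which contradicts the definition and so excludes $b=0$. The same reasoning excludes $c=0$. I will therefore use the standard characterization that $A$ is almost semimonotone iff $A$ is not semimonotone but all its proper principal submatrices are, with the $x>0$ vector as its consequence. Having $b,c>0$, multiplying the two inequalities gives $ad\,x_1x_2\le bc\,x_1x_2$, so $\det A\le 0$. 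If $\det A=0$, the vector $x$ must satisfy both inequalities with equality, so $Ax=0$. In that case $A$ is still semimonotone, since every $x\ge0$ has some index with $(Ax)_i\ge 0$. Hence non-semimonotonicity forces $\det A<0$.

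\emph{Strict case.} Here $Ax<0$ gives $bx_2>ax_1>0$ and $cx_1>dx_2>0$, so $b,c>0$ immediately. Multiplying gives $ad<bc$, that is, $\det A<0$. This sits inside the claimed range $\det A\le 0$. For the stated ``iff'' one must check that $\det A=0$ with $a,d,b,c>0$ also gives a non-strictly-semimonotone matrix: the vector $x=(b,a)$ gives $Ax=(0,\,ad-bc)\cdot$ scaled $=0$, violating strict semimonotonicity. Thus, for the strict case, I will again work with the ``not $\mathbf{E}$'' formulation.

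\emph{Converse.} Given $a,d\ge0$, $b,c>0$ and $ad<bc$, choose $x_2/x_1$ strictly between $a/b$ and $c/d$ (interpreting $c/0=\infty$). This gives $x>0$ with $Ax<0$, so $A$ is not semimonotone, while its $1\times1$ principal submatrices are nonnegative. In the strict case with $ad=bc$, the vector $x=(b,a)>0$ gives $Ax=0$, so $A$ is not strictly semimonotone. Both cases thus follow from elementary sign bookkeeping.
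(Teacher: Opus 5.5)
The paper gives no proof of this statement; it is quoted from Wendler's paper. There is no in-paper argument to compare with, so the only question is whether your argument stands on its own. It does. You were right to distrust the definition in the Preliminaries, whose inequalities are swapped. Read literally, the $2\times 2$ zero matrix would be almost semimonotone, and $\begin{bmatrix} 1 & -1\\ -1 & 1\end{bmatrix}$ would not be almost strictly semimonotone, both contradicting the theorem.

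With the Introduction's definition (not (strictly) semimonotone, but all proper principal submatrices are), a violating vector cannot have support $\{1\}$ or $\{2\}$, because $a,d\ge 0$ (resp.\ $a,d>0$). Hence almost $\mathbf{E_0}$ means there is $x>0$ with $Ax<0$, and almost $\mathbf{E}$ means there is $x>0$ with $Ax\le 0$. Used directly, each forward direction is one line:
\begin{itemize}
\item $0\le ax_1<bx_2$ and $0\le dx_2<cx_1$ give $b,c>0$ and $ad<bc$;
\item $0<ax_1\le bx_2$ and $0<dx_2\le cx_1$ give $b,c>0$ and $ad\le bc$.
\end{itemize}
Your converse, including the vector $x=(b,a)$ for the case $ad=bc$, is right.

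Two loose ends in your write-up, both easy to fix.
\begin{itemize}
\item In the non-strict case you work from the weaker $Ax\le 0$ and then exclude $b=0$, $c=0$ and $\det A=0$ separately. This is valid but unnecessary. The sentence ``$A$ is still semimonotone, since every $x\ge 0$ has some index with $(Ax)_i\ge 0$'' merely restates what is to be shown. The actual reason is that any violating vector would be some $y>0$ with $Ay<0$, and your equality argument (which forces $Ay=0$) excludes that.
\item In the strict forward direction you assumed $Ax<0$, which is not what ``not $\mathbf{E}$'' provides. Rerun the computation with $Ax\le 0$; that is precisely what yields the bound $\det A\le 0$ rather than $\det A<0$.
\end{itemize}
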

	
	\begin{theorem}[\cite{tsatsomeros2019semimonotone}, Corollary 5.4]\label{preliminary:cor5.4}
		If $A \in \mathbb{R}^{n \times n}$ is almost semimonotone, then $A^{-1}$ exists and $A^{-1} \leq 0.$
	\end{theorem}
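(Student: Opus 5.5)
The statement is Corollary 5.4 of \cite{tsatsomeros2019semimonotone}: if $A$ is almost semimonotone, then $A^{-1}$ exists and $A^{-1}\le 0$. I will argue by induction on $n$, using only the definition: every proper principal submatrix is semimonotone, and there is $x>0$ with $Ax\le 0$.

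\emph{Step 1: $A$ is nonsingular.} Suppose $Ay=0$ for some $y\neq 0$. Consider $x+ty$ for $t\in\mathbb{R}$. Since $x>0$, we can increase $|t|$ from zero, in whichever direction makes some coordinate of $x+ty$ decrease, until the first coordinate vanishes. If $y$ has a negative entry, take $t>0$; otherwise take $t<0$. This gives $z=x+ty\ge 0$, $z\ne 0$, with some $z_j=0$, and $Az=Ax\le 0$. Two cases arise.

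\begin{itemize}
\item If $Az$ has a negative entry on every index of $\operatorname{supp} z$, then the principal submatrix on $\alpha=\operatorname{supp} z$ is not semimonotone. This contradicts the hypothesis, because $\alpha$ is proper.
\item Otherwise, some index $i\in\operatorname{supp} z$ has $(Az)_i=0$. This is the delicate part.
\end{itemize}

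In the second case I would instead show that $Ax<0$ can be arranged, so that the first case always applies. To do this, let $\beta=\{i: (Ax)_i=0\}$. The restriction $A_{\beta\beta}x_\beta = -A_{\beta\bar\beta}x_{\bar\beta}$ with $x_\beta>0$ must be analyzed. If $\beta$ is proper, perturb $x$ along a direction supplied by the semimonotonicity of the complementary submatrices. If $\beta$ is all of $\{1,\dots,n\}$, then $Ax=0$ and $A$ is singular, and I would need a separate argument. I expect this degenerate case to be the main obstacle. My first attempt would be a continuity argument: the strict version gives $A^{-1}<0$, and the non-strict version would follow by a limiting process, perturbing to $A-\epsilon I$ or $A-\epsilon J$. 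This requires checking that the perturbed matrices keep proper principal submatrices semimonotone, which is itself problematic. So I would rather cite the known result directly than reprove it.

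\emph{Step 2: $A^{-1}\le 0$.} Suppose some column $u=A^{-1}e_k$ has a positive entry. Again consider $x-tu$ for $t\ge 0$, moving until the first coordinate hits zero. This gives $w=x-tu\ge 0$, $w\neq 0$, with some $w_j=0$, and $Aw=Ax-te_k\le 0$. The inequality is strict at coordinate $k$ if $(Ax)_k=0$. Restricting to $\operatorname{supp} w$, which is a proper subset, gives a contradiction with the semimonotonicity of the corresponding principal submatrix, provided the inequality can be made strict on the support. The same tie issue as in Step 1 appears here, and I would handle it as above.

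Honestly, since this is a stated earlier result from \cite{tsatsomeros2019semimonotone}, the paper simply invokes it. My plan would be to cite it rather than re-derive the boundary cases.
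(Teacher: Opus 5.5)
Citing Tsatsomeros--Wendler is exactly what the paper does; it gives no proof of its own. As a standalone argument, however, your sketch has a genuine gap, and it is not a tie-breaking technicality.

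\textbf{The statement is false under the definition you used.} You took the definition as written in the paper's preliminaries: some $x>0$ with $Ax\le 0$. Under that definition the statement fails. The matrix $A=\begin{bmatrix}1&-1\\-1&1\end{bmatrix}$ has semimonotone $1\times 1$ principal submatrices, and $x=(1,1)^T$ gives $Ax=0$, yet $A$ is singular. This is your case $\beta=\{1,\dots,n\}$. It is also exactly where your unjustified claim $z\neq 0$ in Step~1 fails: take $y=x$.

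A proper $\beta$ does not rescue it either. The matrix $\begin{bmatrix}1&-1&0\\-1&1&0\\0&-1&0\end{bmatrix}$ has all proper principal submatrices semimonotone, and $x=(1,1,1)^T$ gives $Ax=(0,0,-1)^T$. It is still singular, so no perturbation along complementary submatrices can work.

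\textbf{The missing point is that the preliminaries have the two inequalities swapped.} Almost semimonotone means $A\notin\mathbf{E_0}$ while every proper principal submatrix lies in $\mathbf{E_0}$. A witness $0\neq x\ge 0$ of $A\notin\mathbf{E_0}$ has $(Ax)_i<0$ on its support $\alpha$. That support cannot be proper, since otherwise $A_{\alpha\alpha}x_\alpha=(Ax)_\alpha<0$. Hence $x>0$ and $Ax<0$ strictly. The non-strict condition $Ax\le 0$ characterizes almost \emph{strictly} semimonotone matrices, and these can be singular: the $2\times 2$ matrix above has $\det A=0$, cf. Theorem~\ref{preliminary:theo3.3}. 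So your planned limiting argument from ``the strict version gives $A^{-1}<0$'' starts from a false premise.

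\textbf{With $Ax<0$, your argument closes immediately.} Steps 1 and 2 merge into one claim: $Ay\ge 0$ implies $y\le 0$. Suppose $Ay\ge 0$ and $y$ has a positive entry. Let $t=\min\{x_i/y_i : y_i>0\}$ and $w=x-ty$. Then $w\ge 0$, some $w_j=0$, and $Aw=Ax-tAy\le Ax<0$. In particular $w\neq 0$, so $\alpha=\mathrm{supp}(w)$ is a nonempty proper subset with $w_\alpha>0$ and $A_{\alpha\alpha}w_\alpha=(Aw)_\alpha<0$. This contradicts $A_{\alpha\alpha}\in\mathbf{E_0}$.

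Applying the claim to $\pm y$ when $Ay=0$ gives nonsingularity. Applying it to $y=A^{-1}e_k$ gives $A^{-1}\le 0$. The strict inequality $Ax<0$ makes $Aw$ negative on the whole support and rules out $w=0$. So you need no case analysis on ties and no continuity argument.
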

	
	\begin{theorem}[\cite{chauhan2023almost}, Theorems 3.6 and 3.7]\label{preliminary:theo3.6&3.7}
		If $A \in \mathbb R^{n \times n}$ is an almost (strictly) semimonotone matrix, then each row and each column of matrix $A$ has a negative entry.
	\end{theorem}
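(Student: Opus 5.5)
The plan is to get both the row and the column statements directly from Theorem~\ref{preliminary:cor5.4}, which says that an almost semimonotone matrix is invertible with a nonpositive inverse. We then read off the diagonal entries of the identities $AA^{-1}=I$ and $A^{-1}A=I$.

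First we reduce the strict case to the non-strict one. Suppose $A$ is almost strictly semimonotone. Every proper principal submatrix of $A$ is then strictly semimonotone, hence semimonotone. There is also some $x>0$ with $Ax<0$, and in particular $Ax\le 0$. So $A$ is almost semimonotone. It therefore suffices to prove the claim for almost semimonotone matrices. (For rows in the strict case there is an even shorter argument: if row $i$ were nonnegative, then $(Ax)_i\ge 0$ for $x>0$, contradicting $Ax<0$. The reduction handles columns uniformly, though.)

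Now let $A$ be almost semimonotone, and set $B=A^{-1}$. By Theorem~\ref{preliminary:cor5.4}, $B$ exists and $B\le 0$ entrywise.

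Suppose some row $i$ of $A$ had no negative entry, i.e.\ $a_{ik}\ge 0$ for all $k$. Then
\[
1=(AB)_{ii}=\sum_{k=1}^n a_{ik}b_{ki}\le 0,
\]
since each term is a product of a nonnegative and a nonpositive number. This is a contradiction.

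Similarly, suppose column $j$ of $A$ had no negative entry. Then
\[
1=(BA)_{jj}=\sum_{k=1}^n b_{jk}a_{kj}\le 0,
\]
again a contradiction. Hence every row and every column of $A$ contains a negative entry.

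The only substantive ingredient is the invertibility and sign pattern of $A^{-1}$, which we take from Theorem~\ref{preliminary:cor5.4}. Once that is granted there is no real obstacle. The one point needing care is the reduction from the strict to the non-strict class, which follows immediately from the definitions as shown above.
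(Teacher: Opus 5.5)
Your argument for the non-strict class is correct. Theorem~\ref{preliminary:cor5.4} gives $A^{-1}\le 0$, and the diagonal entries of $AA^{-1}=I$ and $A^{-1}A=I$ then force a negative entry in every row and every column.

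The gap is the reduction of the strict case to the non-strict one. It relies on the definition in the Preliminaries read literally, but there the two inequalities are interchanged. The intended (standard) definitions are:
\begin{itemize}
\item $A$ is almost semimonotone iff its proper principal submatrices are semimonotone and $Ax<0$ for some $x>0$;
\item $A$ is almost strictly semimonotone iff its proper principal submatrices are strictly semimonotone and $Ax\le 0$ for some $x>0$.
\end{itemize}
Under the literal reading, the $2\times 2$ zero matrix would be ``almost semimonotone'', contradicting both Theorem~\ref{preliminary:cor5.4} and the statement itself. With the correct definitions, strict does not imply non-strict. By Theorem~\ref{preliminary:theo3.3}, $A=\begin{bmatrix}1&-1\\-1&1\end{bmatrix}$ is almost strictly semimonotone ($a=d=1>0$, $\det A=0\le 0$) but not almost semimonotone (since $\det A=0$). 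In fact it is copositive, hence semimonotone, and it is singular. So Theorem~\ref{preliminary:cor5.4} is unavailable in the strict case, and no argument through a nonpositive inverse can cover it. Your shortcut for rows fails for the same reason: you only have $Ax\le 0$, not $Ax<0$.

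The strict case needs a direct argument.
\begin{itemize}
\item \textbf{Rows.} Take $x>0$ with $Ax\le 0$. For $n\ge 2$ every $1\times 1$ principal submatrix is proper, hence strictly semimonotone, so $a_{ii}>0$. If row $i$ had no negative entry, then $0\ge (Ax)_i\ge a_{ii}x_i>0$, a contradiction. The hypothesis $n\ge 2$ is genuinely needed, since $[0]\in\mathbb{R}^{1\times 1}$ is almost strictly semimonotone and has no negative entry.
\item \textbf{Columns.} Use that $A$ is strictly semimonotone iff $A^T$ is \cite[Theorem 3.1(6)]{tsatsomeros2019semimonotone}. The principal submatrices of $A^T$ are the transposes of those of $A$, so $A^T$ is again almost strictly semimonotone. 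Applying the row argument to $A^T$ gives the columns.
\end{itemize}
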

	
	\section{Semimonotone Matrices of Exact Order 2}\label{mainsectioneo2}
	
	In this section, we focus primarily on (strictly) semimonotone matrices of exact order \(k=2 \), as the analysis becomes significantly more challenging for larger values of \( k \). Constructing explicit examples of such matrices for \( k > 2 \) is nontrivial, especially in higher-dimensional cases. We begin this section by introducing the concept of (strictly) semimonotone matrices of exact order \( k \).
	
	\begin{definition}\label{definition:exact_order_k_matrix}
		A matrix \( A \in \mathbb{R}^{n \times n} \) is said to be a \textit{(strictly) semimonotone matrix of exact order \( k \)}, where \( 0 \leq k \leq n \), if every principal submatrix of \( A \) of order \( n - k \) is (strictly) semimonotone, but no principal submatrix of order \( r \), where \( n - k < r \leq n \), is (strictly) semimonotone.
	\end{definition}
	
	The class of semimonotone matrices of exact order \( k \) is denoted by \( \mathbf{E_{0_k}} \), and its elements are called \( \mathbf{E_{0_k}} \)-matrices. Similarly, the class of strictly semimonotone matrices of exact order \( k \) is denoted by \( \mathbf{E_k} \), and its elements are called \( \mathbf{E_k} \)-matrices.
	
	\begin{example}
		The following matrices are examples of an \( \mathbf{E_{0_2}} \)-matrix and an \( \mathbf{E_2} \)-matrix, respectively:
		\[
		A = \begin{bmatrix}
			0 & -1 & -1 \\
			-2 & 0 & -1 \\
			-3 & -4 & 0
		\end{bmatrix}
		\quad \text{and} \quad
		B = \begin{bmatrix}
			1 & -2 & -2 \\
			-2 & 1 & -2 \\
			-2 & -2 & 1
		\end{bmatrix}.
		\]
	\end{example}
	
	\begin{remark}
		From Definition~\ref{definition:exact_order_k_matrix}, it is clear that every (strictly) semimonotone matrix is a (strictly) semimonotone matrix of exact order \( 0 \), and that every almost (strictly) semimonotone matrix is a (strictly) semimonotone matrix of exact order \( 1 \).
	\end{remark}
	
	The classes of (strictly) semimonotone matrices of exact orders \( 0 \) and \( 1 \) are well studied in the literature. In the upcoming subsections, we present characterization theorems for $3\times 3$ (strictly) semimonotone matrices of exact order \( 2 \), and then discuss a few results concerning matrices of general exact order \( 2 \). 
	
	\subsection{Semimonotone Matrices of Exact Order Two in \( \mathbb{R}^{3 \times 3} \)}
	
	In this subsection, we investigate the structure of \( \mathbf{E_{0_2}} \) (resp., \( \mathbf{E_2} \))-matrices within the class of \( 3 \times 3 \) real matrices and present a complete characterization in terms of their sign patterns and principal minors.
	
	\begin{theorem}\label{theorem:3x3structure}
		Let \( A \in \mathbb{R}^{3 \times 3} \). If \( A \) is an \( \mathbf{E_{0_2}} \) (resp., \( \mathbf{E_2} \))-matrix, then the following hold:
		\begin{itemize}
			\item[(a)] \( A \) is a \( \mathbf{Z} \)-matrix and has the form
			\[
			A =
			\begin{bmatrix}
				a & -b & -c \\
				-d & e & -f \\
				-g & -h & i
			\end{bmatrix},
			\]
			where \( a, e, i \geq 0 \) (resp., \( a, e, i > 0 \)) and \( b, c, d, f, g, h > 0 \).
			
			\item[(b)] All principal minors of \( A \) of order \( 2 \) are negative (resp., nonpositive).
		\end{itemize}
	\end{theorem}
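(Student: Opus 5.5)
Unpacking Definition~\ref{definition:exact_order_k_matrix} with \(n=3\), \(k=2\): every \(1\times 1\) principal submatrix is (strictly) semimonotone, while no \(2\times 2\) principal submatrix and not \(A\) itself is (strictly) semimonotone. The plan is to get everything from the \(2\times 2\) principal submatrices, using Theorem~\ref{preliminary:theo3.3}.

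\textbf{Step 1 (diagonal entries).} A \(1\times 1\) matrix \([a]\) is semimonotone iff \(a\ge 0\), and strictly semimonotone iff \(a>0\). Hence \(a,e,i\ge 0\) (resp.\ \(>0\)).

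\textbf{Step 2 (each \(2\times 2\) principal submatrix is almost semimonotone).} Fix \(\alpha=\{p,q\}\). The proper principal submatrices of \(A_{\alpha\alpha}\) are its \(1\times 1\) diagonal entries, which are (strictly) semimonotone by Step~1. By hypothesis \(A_{\alpha\alpha}\) itself is not (strictly) semimonotone. I will use the standard equivalence that a matrix whose proper principal submatrices are all (strictly) semimonotone, but which is not itself (strictly) semimonotone, admits \(x>0\) with \(A_{\alpha\alpha}x\le 0\) (resp.\ \(<0\)).

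To justify this in the \(2\times 2\) case, take a witness \(0\ne x\ge 0\) of failure. If \(x\) had a zero coordinate, then \(x\) would be supported on a single index \(j\). The failure condition would then force \(a_{jj}x_j<0\) (resp.\ \(\le 0\)), contradicting Step~1. So \(x>0\), and the failure condition at both coordinates gives \(A_{\alpha\alpha}x<0\) (resp.\ \(\le 0\)), which is at least as strong as required. Thus \(A_{\alpha\alpha}\) is almost (strictly) semimonotone.

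\textbf{Step 3 (apply the \(2\times 2\) characterization).} Applying Theorem~\ref{preliminary:theo3.3} to each of the three principal submatrices \(A_{\{1,2\}}\), \(A_{\{1,3\}}\), \(A_{\{2,3\}}\) gives:
\begin{itemize}
\item both off-diagonal entries of each submatrix are negative, which yields \(b,c,d,f,g,h>0\) and hence the \(\mathbf Z\)-form in (a);
\item each determinant is \(<0\) (resp.\ \(\le 0\)), which is exactly (b).
\end{itemize}
Every off-diagonal entry of \(A\) lies in some \(2\times 2\) principal submatrix, so all of them are covered.

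\textbf{Main obstacle.} The only delicate point is the equivalence in Step~2 between "not semimonotone, with semimonotone proper principal submatrices" and the definition of almost semimonotone given in the preliminaries, which asks for \(x>0\). I handle it directly by the support argument above, taking care with the strict and non-strict inequalities in the two cases. Note that the condition that \(A\) itself is not semimonotone is not needed for (a) and (b).
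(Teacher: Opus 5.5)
Your proof is correct and is the paper's proof. The $1\times 1$ principal submatrices give the signs of $a,e,i$. Each $2\times 2$ principal submatrix is not (strictly) semimonotone by hypothesis, while its only proper principal submatrices, its diagonal entries, are; so it is almost (strictly) semimonotone. Theorem~\ref{preliminary:theo3.3} then gives both the negative off-diagonal entries and the sign of the order-$2$ minors.

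One slip in Step~2 needs fixing. In the strict case your support argument yields $x>0$ with $A_{\alpha\alpha}x\le 0$, and you call this ``at least as strong as'' the target $A_{\alpha\alpha}x<0$. That is backwards. Moreover, the target is unattainable in general. Take the $\mathbf{E_2}$-matrix with all diagonal entries $1$ and all off-diagonal entries $-1$: its $2\times 2$ principal submatrices $\begin{bmatrix}1&-1\\-1&1\end{bmatrix}$ admit no $x>0$ with strictly negative image.

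The trouble comes from the preliminaries, which swap the two inequalities. The correct characterization is $x>0,\ Ax<0$ for almost semimonotone and $x>0,\ Ax\le 0$ for almost strictly semimonotone. This is the version consistent with $\det<0$ versus $\det\le 0$ in Theorem~\ref{preliminary:theo3.3}, and it is exactly what your argument produces. With the target stated this way, your proof goes through. Alternatively, use the original definition, ``not (strictly) semimonotone but all proper principal submatrices are'', as the paper does; then Step~2 is immediate.
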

	
	\begin{proof}
		$(a)$. Suppose \( A \in \mathbb{R}^{3 \times 3} \) is an \( \mathbf{E_{0_2}} \) (resp., \( \mathbf{E_2} \))-matrix. Then every principal submatrix of \( A \) of order \( 1 \) is (strictly) semimonotone, while no principal submatrix of order \( 2 \) or \( 3 \) is (strictly) semimonotone. It implies that all diagonal entries of \( A \) are nonnegative (resp., positive), and that every principal submatrix of order \( 2 \) is almost (strictly) semimonotone.
		
		By Theorem~\ref{preliminary:theo3.3}, each principal submatrix of \( A \) of order $2$ has negative off-diagonal entries, which further implies that all off-diagonal entries of \( A \) are negative. That is , \( A \) is a \( \mathbf{Z} \)-matrix and $A$ can be written in the following form:
		$$
		A =
		\begin{bmatrix}
			a & -b & -c \\
			-d & e & -f \\
			-g & -h &i		
		\end{bmatrix},
		$$
		where \( a, e, i \geq 0 \) (resp., \( a, e, i > 0 \)) and \( b, c, d, f, g, h > 0 \).
		
		$(b)$. From the definition of $A$, all the principal submatrices of \( A \) of order $2$ are almost (strictly) semimonotone. Then, by Theorem~\ref{preliminary:theo3.3}, the determination of all its principal submatrices of $A$ of order $2$ is negative (resp., nonpositive).
	\end{proof}
	
	\bcl
	Let \( A \in \mathbb{R}^{3 \times 3} \) be an \( \mathbf{E_{0_2}} \) (resp., \( \mathbf{E_2} \))-matrix. Then \( A \) cannot be a triangular matrix, and hence \( A \) is irreducible.
	\ecl
	
	\begin{proof}
		By Theorem~\ref{theorem:3x3structure}$(a)$, every $\mathbf{E_{0_2}}$ (resp., $\mathbf{E_2}$)-matrix has all off-diagonal entries negative. The desired conclusion for $A$ follows immediately.
	\end{proof}
	In the next result, we present the relationship between
	\( \mathbf{E_{0_2}} \) (resp., \( \mathbf{E_2} \)) and
	\( \mathbf{Q_0} \)-matrices for matrices of order \(3\).
	
	\begin{corollary}\label{cor:E02-Q0}
		Let \( A \in \mathbb{R}^{3 \times 3} \).
		If \( A \) is an \( \mathbf{E_{0_2}} \) (resp., \( \mathbf{E_2} \))-matrix,
		then \( A \) is a \( \mathbf{Q_0} \)-matrix.
	\end{corollary}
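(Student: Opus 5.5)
The proof reduces to a classical fact about $\mathbf{Z}$-matrices. By Theorem~\ref{theorem:3x3structure}$(a)$, every $3\times 3$ $\mathbf{E_{0_2}}$ (resp., $\mathbf{E_2}$)-matrix is a $\mathbf{Z}$-matrix. So it suffices to show that every $\mathbf{Z}$-matrix $A\in\mathbb{R}^{n\times n}$ is a $\mathbf{Q_0}$-matrix. This is the known inclusion $\mathbf{Z}\subseteq\mathbf{Q_0}$ (see \cite{cottle2009linear}), and we could simply cite it. For completeness, I would reproduce the standard least-element argument in three steps.

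\emph{Step 1: closure under componentwise minimum.} Fix $q$ with $\mathrm{FEA}(q,A)\neq\emptyset$, and let $z,w\in \mathrm{FEA}(q,A)$. Put $u=\min(z,w)$ componentwise, so $u\ge 0$. For an index $i$ with $u_i=z_i$, we have $u_j\le z_j$ and $a_{ij}\le 0$ for $j\neq i$. Hence
\[
(q+Au)_i = q_i + a_{ii}z_i + \sum_{j\neq i} a_{ij}u_j \ \ge\ q_i + \sum_{j} a_{ij}z_j \ \ge 0 .
\]
The case $u_i=w_i$ is symmetric. Therefore $u\in\mathrm{FEA}(q,A)$.

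\emph{Step 2: existence of a least element.} Fix a feasible $z^0$ and minimize $e^Tz$ over the set $\{z\in\mathrm{FEA}(q,A): z\le z^0\}$. This set is nonempty, closed and bounded, so a minimizer $z^*$ exists. To see that $z^*$ is the least element of $\mathrm{FEA}(q,A)$, take any feasible $w$. By Step 1, $\min(z^*,w)$ is feasible and satisfies $\min(z^*,w)\le z^0$. Minimality of $e^Tz^*$ then forces $\min(z^*,w)=z^*$, i.e.\ $z^*\le w$.

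\emph{Step 3: complementarity of $z^*$.} Suppose, for contradiction, that $z^*_i>0$ and $(q+Az^*)_i>0$ for some $i$. Set $z^\varepsilon=z^*-\varepsilon e_i$ with $\varepsilon>0$ small.
\begin{itemize}
\item For $\varepsilon$ small, $z^\varepsilon\ge 0$ and $(q+Az^\varepsilon)_i=(q+Az^*)_i-\varepsilon a_{ii}>0$.
\item For $j\neq i$, $(q+Az^\varepsilon)_j=(q+Az^*)_j-\varepsilon a_{ji}\ge (q+Az^*)_j\ge 0$, because $a_{ji}\le 0$.
\end{itemize}
Thus $z^\varepsilon$ is feasible and strictly smaller than $z^*$ in the $i$th coordinate, contradicting Step 2. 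Hence $z^{*T}(q+Az^*)=0$, so $z^*\in\mathrm{SOL}(q,A)$, and $A\in\mathbf{Q_0}$.

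The only delicate point is Step 2: the feasible set may be unbounded, so the minimization must be restricted to the compact box $\{0\le z\le z^0\}$ to guarantee a minimizer. Everything else follows from the sign pattern supplied by Theorem~\ref{theorem:3x3structure}.
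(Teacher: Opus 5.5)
Your proof is correct, but it takes a different route from the paper after the common first step. Both arguments use Theorem~\ref{theorem:3x3structure}(a) to conclude that $A$ is a $\mathbf{Z}$-matrix.

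The paper then observes that every $\mathbf{Z}$-matrix is a hidden $\mathbf{Z}$-matrix (take $X=I$, $Y=A$, $r=e$, $s=0$). It then cites \cite[Theorem~3.9]{dub}, which says hidden $\mathbf{Z}$-matrices are $\mathbf{Q_0}$.

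You instead prove the classical inclusion $\mathbf{Z}\subseteq\mathbf{Q_0}$ directly, via the least-element property of $\mathrm{FEA}(q,A)$. Your three steps are closure under componentwise minimum, existence of a least element, and complementarity of that element. Each step checks out, including the sign bookkeeping $a_{ij}u_j\ge a_{ij}z_j$ in Step 1 and $-\varepsilon a_{ji}\ge 0$ in Step 3.

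The trade-off is as follows:
\begin{itemize}
\item The paper's version is a one-liner, but it relies on an external theorem about a strictly larger class whose generality is not needed here.
\item Your version is self-contained and uses only the sign pattern of $A$.
\item Your version also gives more: whenever the LCP is feasible, it has a solution that is the least element of the feasible set, computable by the single linear program $\min\{e^Tz : z\in\mathrm{FEA}(q,A)\}$.
\end{itemize}

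One small remark on Step 2. Restricting to the box $\{z\le z^0\}$ is a convenient way to get a minimizer by compactness, but it is not forced, contrary to your closing comment. The objective $e^Tz$ is bounded below by $0$ on the polyhedron $\mathrm{FEA}(q,A)$, and a linear program that is bounded below attains its minimum.
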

	
	\begin{proof}
		By Theorem~\ref{theorem:3x3structure}\emph{(a)}, an
		\( \mathbf{E_{0_2}} \) (resp., \( \mathbf{E_2} \))-matrix of order \(3\)
		is a \( \mathbf{Z} \)-matrix.
		Moreover, every \( \mathbf{Z} \)-matrix is a hidden \( \mathbf{Z} \)-matrix.
		Hence, by \cite[Theorem~3.9]{dub}, \( A \) is a \( \mathbf{Q_0} \)-matrix.
	\end{proof}
	
	In the following result, we establish the existence of the inverse of an \( \mathbf{E_{0_2}} \)-matrix in \( \mathbb{R}^{3 \times 3} \), using the Schur complement.
	
	\begin{theorem}\label{theorem:3x3inverse}
		Let \( A \in \mathbb{R}^{3 \times 3} \) be an \( \mathbf{E_{0_2}} \)-matrix. Then \( \det A < 0 \), and hence \( A^{-1} \) exists and is a \( \mathbf{Z} \)-matrix. Moreover, \( A \) has exactly one negative eigenvalue.
	\end{theorem}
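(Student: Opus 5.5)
The plan is to compute \( \det A \) directly from the sign structure in Theorem~\ref{theorem:3x3structure}; no Schur complement is actually needed. By Theorem~\ref{theorem:3x3structure}(a) we may write
\[
A=\begin{bmatrix} a & -b & -c\\ -d & e & -f\\ -g & -h & i\end{bmatrix},
\]
with \(a,e,i\ge 0\) and \(b,c,d,f,g,h>0\). By part (b), the three principal minors of order \(2\) are negative, i.e.
\[
ae<bd,\qquad ai<cg,\qquad ei<fh .
\]
Cofactor expansion gives
\[
\det A = aei - afh - bdi - bfg - cdh - ceg .
\]
I will group the first and third terms as \( i(ae-bd) \). This is \( \le 0 \) because \( i\ge 0 \) and \( ae<bd \). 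The remaining terms \( -afh, -cdh, -ceg \) are all \( \le 0 \), and \( -bfg<0 \) strictly since \( b,f,g>0 \). Hence \( \det A<0 \), so \( A^{-1} \) exists.

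For the \( \mathbf{Z} \)-property of \( A^{-1} \), I use \( A^{-1}=\operatorname{adj}(A)/\det A \). It then suffices to show that every off-diagonal entry of \( \operatorname{adj}(A) \) is positive. For example, the \( (1,2) \) entry of \( \operatorname{adj}(A) \) is the cofactor
\[
C_{21}=-\det\begin{bmatrix}-b&-c\\-h&i\end{bmatrix}=bi+ch>0 .
\]
In the same way, each off-diagonal cofactor has the form (diagonal entry)\(\times\)(positive) \(+\) (positive)\(\times\)(positive), so it is positive. For instance, the \( (2,1) \) entry is \( di+fg \) and the \( (1,3) \) entry is \( bf+ce \). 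Dividing by \( \det A<0 \) makes all off-diagonal entries of \( A^{-1} \) negative, so \( A^{-1} \) is a \( \mathbf{Z} \)-matrix.

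For the eigenvalue count, I use that \( A \) is real, so its nonreal eigenvalues come in conjugate pairs whose product is positive. Since \( \det A<0 \) is the product of the eigenvalues and \( 0 \) is not an eigenvalue, \( A \) has an odd number of negative real eigenvalues, namely \(1\) or \(3\). Three negative eigenvalues would force \( \operatorname{tr}A=a+e+i<0 \), contradicting \( a,e,i\ge 0 \). Hence exactly one eigenvalue is negative.

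The only place requiring care is the determinant sign. The key step is pairing \( aei \) with \( -bdi \) via the \(2\times 2\) minor condition \( ae<bd \); every other step is routine sign bookkeeping.
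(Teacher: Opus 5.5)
Your proof is correct and follows the paper's argument for the $\mathbf{Z}$-property of $A^{-1}$ (positive off-diagonal entries of $\mathrm{adj}(A)$) and for the eigenvalue count ($\det A<0$ forces an odd number of negative eigenvalues, and $\mathrm{tr}\,A\ge 0$ leaves exactly one). The only difference is the sign of $\det A$. The paper forms the Schur complement $A/A_{\alpha\alpha}=i-\frac{g(ce+bf)+h(cd+af)}{ae-bd}$ for $\alpha=\{1,2\}$ and uses $\det A=\det A_{\alpha\alpha}\cdot(A/A_{\alpha\alpha})$; clearing the denominator gives exactly your grouping $\det A=i(ae-bd)-[g(ce+bf)+h(cd+af)]$. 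Your direct expansion avoids the division, while the paper's Schur-complement form is the template it reuses for the $n\times n$ case in Theorem~\ref{theorem:4.11}.
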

	
	\begin{proof} 
		Let $\alpha = \{1,2\}$, and let $A \in \mathbb{R}^{3 \times 3}$ be an $\mathbf{E_{0_2}}$-matrix.  
		By Theorem~\ref{theorem:3x3structure}(a), $A$ admits the representation
		\[
		A =
		\begin{bmatrix}
			A_{\alpha\alpha} & A_{\alpha\bar{\alpha}} \\
			A_{\bar{\alpha}\alpha} & A_{\bar{\alpha}\bar{\alpha}}
		\end{bmatrix}
		=
		\begin{bmatrix}
			a & -b & -c \\
			-d & e & -f \\
			-g & -h & i
		\end{bmatrix},
		\]
		where
		\[
		A_{\alpha\alpha} =
		\begin{bmatrix}
			a & -b \\
			-d & e
		\end{bmatrix},
		\qquad
		a,e,i \ge 0,
		\quad
		b,c,d,f,g,h > 0.
		\]
		
		By Theorem~\ref{theorem:3x3structure}(b), every principal minor of $A$ of order $2$ is negative. 
		In particular, $\det A_{\alpha\alpha} = ae - bd < 0,$ and hence $A_{\alpha\alpha}$ is nonsingular. Therefore, the Schur complement of $A_{\alpha\alpha}$ in $A$ is well defined. A direct computation gives
		\begin{align*}
			A / A_{\alpha\alpha}
			&= A_{\bar{\alpha}\bar{\alpha}}
			- A_{\bar{\alpha}\alpha} A_{\alpha\alpha}^{-1} A_{\alpha\bar{\alpha}} \\
			&= i
			- \begin{bmatrix} -g & -h \end{bmatrix}
			\begin{bmatrix} a & -b \\ -d & e \end{bmatrix}^{-1}
			\begin{bmatrix} -c \\ -f \end{bmatrix} \\
			&= i
			- \frac{1}{ae - bd}
			\begin{bmatrix} -g & -h \end{bmatrix}
			\begin{bmatrix} e & b \\ d & a \end{bmatrix}
			\begin{bmatrix} -c \\ -f \end{bmatrix} \\
			&= i - \frac{g(ce + bf) + h(cd + af)}{ae - bd}.
		\end{align*}
		
		Since $\det A_{\alpha\alpha} = ae - bd < 0$ and $a,e \ge 0$ together with
		$b,c,d,f,g,h > 0$ imply that
		\[
		g(ce + bf) + h(cd + af) > 0,
		\]
		it follows that
		\[
		A / A_{\alpha\alpha} > 0,
		\]
		and hence $\det(A / A_{\alpha\alpha}) > 0$. Therefore, by Schur’s determinantal
		formula (see \cite{crabtree1969identity}),
		\[
		\det A = \det A_{\alpha\alpha} \cdot \det(A / A_{\alpha\alpha}).
		\]
		Since $\det A_{\alpha\alpha} < 0$, we conclude that $\det A < 0$. In particular,
		$A$ is nonsingular, and hence $A^{-1}$ exists.
		
		Next, we compute the adjugate matrix of $A$ to determine $A^{-1}$. Given the structure of $A$, we have
		\[
		\text{adj}(A) = \begin{bmatrix}
			ei - fh & bi + ch & bf + ce \\
			di + fg & ai - cg & af + cd \\
			dh + eg & ah + bg & ae - bd
		\end{bmatrix}.
		\]
		
		All off-diagonal entries of $\text{adj}(A)$ are positive since $a, e, i \geq 0$ and $b, c, d, f, g, h > 0$. Using the identity 
		\[
		A^{-1} = \frac{1}{\det A} \text{adj}(A),
		\]
		and noting that $\det A < 0$, we conclude that all off-diagonal entries of $A^{-1}$ are negative. Therefore, $A^{-1}$ is a $\mathbf{Z}$-matrix.
		
		Next, we consider $\text{det}A$, which is negative, implying that $A$ has an odd number of negative eigenvalues. Since all complex eigenvalues of a $3\times 3$ matrix $A$ must appear in conjugate pairs and $\text{tr}(A) \geq 0$, it follows that $A$ has exactly one negative eigenvalue.
	\end{proof}

	
	\subsection{Semimonotone Matrices of Exact Order Two in \( \mathbb{R}^{n \times n} \)}
	
	In the preceding subsection, we examined the structure of \( \mathbf{E_{0_2}} \) (resp., \( \mathbf{E_2} \))-matrices within the class of \( 3 \times 3 \) real matrices. Here, we extend our investigation to \( n\times n \) real matrices of these classes. We begin by formally defining \( \mathbf{E_{0_2}} \) (resp., \( \mathbf{E_2} \))-matrices, followed by an exploration of their fundamental properties.
	
	\begin{definition}
		A matrix \( A \in \mathbb{R}^{n \times n} \) is said to be an \( \mathbf{E_{0_2}} \) (resp., \( \mathbf{E_2} \))-matrix if every principal submatrix of \( A \) of order \( n - 2 \) is (strictly) semimonotone, but no principal submatrix of order \( r \), where \( r = n - 1 \) or \( r = n \), is (strictly) semimonotone.
	\end{definition}
	
	\begin{example}\label{example:4.2}
		Consider the matrix
		\[
		A =
		\begin{bmatrix}
			1 & \frac{1}{2} & -1 & -1 \\
			\frac{1}{2} & 1 & -1 & -1 \\
			-1 & -1 & 1 & 0 \\
			-1 & -1 & 0 & 1
		\end{bmatrix}.
		\]
		Here, \( A \) is a \( 4 \times 4 \) real matrix. It is straightforward to verify that all principal submatrices of \( A \) of order \( 2 \) are semimonotone, while no principal submatrix of order \( 3 \) or \( 4 \) is semimonotone. Therefore, \( A \in \mathbf{E_{0_2}} \). 
		
		However, since not all off-diagonal entries of \( A \) are nonpositive, \( A \) is not a \( \mathbf{Z} \)-matrix. Interestingly, the inverse of \( A \) is a \( \mathbf{Z} \)-matrix:
		\[
		A^{-1} =
		\begin{bmatrix}
			\frac{4}{5} & -\frac{6}{5} & -\frac{2}{5} & -\frac{2}{5} \\
			-\frac{6}{5} & \frac{4}{5} & -\frac{2}{5} & -\frac{2}{5} \\
			-\frac{2}{5} & -\frac{2}{5} & \frac{1}{5} & -\frac{4}{5} \\
			-\frac{2}{5} & -\frac{2}{5} & -\frac{4}{5} & \frac{1}{5}
		\end{bmatrix}.
		\]
	\end{example}
	The above example shows that Theorem~\ref{theorem:3x3structure} does not extend to arbitrary \( n\times n \) real matrices. Nevertheless, it motivates the formulation of the following result, which addresses the general case.
	
	\begin{proposition}\label{proposition:4.10}
		Let  \( A \in \mathbb{R}^{n \times n} \), where \( n \geq 3 \) be a positive integer. If \( A \) is an \( \mathbf{E_{0_2}} \) (resp., \( \mathbf{E_2} \))-matrix, then the following hold:
		\begin{itemize}
			\item[(a)] All diagonal entries of \( A \) are nonnegative (resp., positive).
			
			\item[(b)] Each row and each column of \( A \) contains at least two negative entries.
		\end{itemize}
	\end{proposition}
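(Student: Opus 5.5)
Part (a) follows from the hereditary nature of semimonotonicity. Since $n\ge 3$, we have $n-2\ge 1$. For each $i$, choose an index set $\alpha$ of size $n-2$ containing $i$. By definition, $A_{\alpha\alpha}$ is (strictly) semimonotone, and every principal submatrix of a (strictly) semimonotone matrix is again (strictly) semimonotone. (Given $0\neq y\ge 0$ supported on a subset $\beta\subseteq\alpha$, extend $y$ by zeros to $x\in\mathbb{R}^{|\alpha|}$; the index supplied for $x$ lies in $\beta$ and gives the required sign for $A_{\beta\beta}y$.) Hence the $1\times 1$ matrix $[a_{ii}]$ is (strictly) semimonotone, so $a_{ii}\ge 0$ (resp., $a_{ii}>0$).

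For part (b), the key observation is that every principal submatrix of order $n-1$ is almost (strictly) semimonotone. Indeed, for $|\beta|=n-1$, $A_{\beta\beta}$ is not (strictly) semimonotone. Each of its proper principal submatrices is a principal submatrix of some $A_{\alpha\alpha}$ with $|\alpha|=n-2$, hence (strictly) semimonotone by the hereditary property above. A non-(strictly) semimonotone matrix all of whose proper principal submatrices are (strictly) semimonotone admits $x>0$ with $A_{\beta\beta}x\le 0$ (resp., $<0$). This is the standard equivalence underlying the definition of almost semimonotone matrices in \cite{tsatsomeros2019semimonotone, wendler2019almost}. The reason is that a witness $x$ of non-semimonotonicity with a zero entry would restrict to a witness for a proper principal submatrix. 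So $A_{\beta\beta}$ is almost (strictly) semimonotone, and by Theorem~\ref{preliminary:theo3.6&3.7} every row and every column of $A_{\beta\beta}$ contains a negative entry.

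Now fix a row $i$. Since $n\ge 3$, pick any $j\neq i$ and let $\beta=\{1,\dots,n\}\setminus\{j\}$, which contains $i$. Row $i$ of $A_{\beta\beta}$ has a negative entry, say $a_{ik}$ with $k\neq j$. That entry is off-diagonal by part (a), so $k\neq i$. Next take $\beta'=\{1,\dots,n\}\setminus\{k\}$, which still contains $i$ since $k\neq i$. Row $i$ of $A_{\beta'\beta'}$ contains a negative entry $a_{il}$ with $l\neq k$, and again $l\neq i$ by (a). Thus row $i$ has two distinct negative entries $a_{ik}$ and $a_{il}$. The same argument applied to columns, or to $A^T$ (note that transposition does not preserve semimonotonicity, so one should use the column statement of Theorem~\ref{preliminary:theo3.6&3.7} directly), gives the column claim.

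The only step needing care is the claim that the order-$(n-1)$ principal submatrices are almost (strictly) semimonotone in the sense of the paper's definition, that is, the existence of a positive vector $x>0$. I would justify this by the zero-padding/restriction argument sketched above, or by citing the corresponding characterization in \cite{tsatsomeros2019semimonotone, wendler2019almost}. The rest is immediate.
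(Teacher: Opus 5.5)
This is correct and essentially the paper's proof. The paper runs the same deletion idea as a contradiction (a lone negative entry $a_{ij}$ in row $i$ would leave a nonnegative row $i$ in the almost semimonotone submatrix obtained by deleting index $j$) and gets the column claim from Proposition~\ref{proposition:transpose}; you additionally justify why the order-$(n-1)$ principal submatrices are almost (strictly) semimonotone, which the paper only asserts.

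Two asides are wrong but harmless. First, transposition \emph{does} preserve (strict) semimonotonicity (\cite[Theorem~3.1(6)]{tsatsomeros2019semimonotone}, exactly what Proposition~\ref{proposition:transpose} uses). Second, in the strict case your restriction argument yields $x>0$ with $A_{\beta\beta}x\le 0$, not $<0$; the inequalities pair the other way, as Theorem~\ref{preliminary:theo3.3} reflects. Neither affects the proof, since Theorem~\ref{preliminary:theo3.6&3.7} already covers columns and needs only that $A_{\beta\beta}$ is not (strictly) semimonotone while all its proper principal submatrices are.
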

	
	\begin{proof}
		Suppose \( A \in \mathbb{R}^{n \times n} \) is an \( \mathbf{E_{0_2}} \) (resp., \( \mathbf{E_2} \))-matrix. Then every principal submatrix of \( A \) of order \( n-2 \) is (strictly) semimonotone, while no principal submatrix of order \( n-1 \) or \( n \) is (strictly) semimonotone. Note that \( n - 2 \geq 1 \), so all principal submatrices of \( A \) of order 1 are (strictly) semimonotone. This implies that all diagonal entries of \( A \) are nonnegative (resp., positive), which proves part~$(a)$.
		
		To prove part~(b), we first show that each row of an \( \mathbf{E_{0_2}} \) (resp., \( \mathbf{E_2} \))-matrix \( A \) contains at least two negative entries. Consider the index set \( \alpha = \{1, \ldots, n\} \setminus \{n\} \), and let \( A_{\alpha \alpha} \) be the corresponding principal submatrix of order \( n - 1 \). Since every principal submatrix of order \( n-1 \) is almost (strictly) semimonotone, it follows from Theorem~\ref{preliminary:theo3.6&3.7} that each row of \( A_{\alpha\alpha} \) contains at least one negative entry. It implies that the first \( n-1 \) rows of \( A \) contain at least one negative entry. Similarly, consider \( \alpha = \{1, \ldots, n\} \setminus \{1\} \). The corresponding submatrix \( A_{\alpha\alpha} \) again satisfies the same property, implying that the last \( n-1 \) rows of \( A \) each contain at least one negative entry. Since every row of \( A \) appears in at least two such submatrices, each row of \( A \) must contain at least one negative entry.
		
		Now, suppose that some row of \( A \), say the \( i \)th row, contains exactly one negative entry, located in the \( j \)th column. Consider the index set \( \alpha = \{1,\ldots,n\} \setminus \{j\} \). Then the corresponding principal submatrix \( A_{\alpha \alpha} \) has a nonnegative \( i \)th row of \(A\). It contradicts the fact that \( A_{\alpha \alpha} \) is an almost (strictly) semimonotone matrix, which, by Theorem~\ref{preliminary:theo3.6&3.7}, implies that each row of \( A_{\alpha \alpha} \) must contain at least one negative entry. Therefore, every row of \( A \) must contain at least two negative entries.
		
		The fact that each column of \( A \) contains at least two negative entries follows from the observation that \( A \) is an \( \mathbf{E_{0_2}} \) (resp., \( \mathbf{E_2} \))-matrix if and only if \( A^{T} \) is also an \( \mathbf{E_{0_2}} \) (resp., \( \mathbf{E_2} \))-matrix (see, Proposition \ref{proposition:transpose}).
	\end{proof}
	
	In the following result, we show that every \( n\times n \) real matrix in \( \mathbf{Z} \cap \mathbf{E_{0_2}} \) is invertible, and we establish this using the Schur complement.
	
	\begin{theorem}\label{theorem:4.11}
		Let \( A \in \mathbb{R}^{n \times n} \cap \mathbf{Z} \) be an \( \mathbf{E_{0_2}} \)-matrix. Then the following statements hold:
		\begin{itemize}
			\item[(a)] All proper principal minors of \( A \) of order at most \( n - 2 \) are nonnegative.
			
			\item[(b)] All proper principal minors of \( A \) of order \( n - 1 \) are negative.
			
			\item[(c)] \( \det A < 0 \).
			
			\item[(d)] The inverse \( A^{-1} \) exists, and all diagonal entries of \( A^{-1} \) are positive.
		\end{itemize}
	\end{theorem}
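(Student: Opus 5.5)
The plan is to use the known characterization of semimonotone $\mathbf{Z}$-matrices: a $\mathbf{Z}$-matrix is semimonotone if and only if it is a $\mathbf{P_0}$-matrix (Cottle's result). I will cite this from \cite{cottle2009linear}, since it is not stated earlier in the paper.

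For (a), every principal submatrix of $A$ of order $n-2$ is a semimonotone $\mathbf{Z}$-matrix, hence a $\mathbf{P_0}$-matrix. Every principal submatrix of order at most $n-2$ is a principal submatrix of one of these. So all such principal minors are nonnegative.

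For (b), let $B=A_{\alpha\alpha}$ with $|\alpha|=n-1$. Then $B$ is a $\mathbf{Z}$-matrix that is not semimonotone, so $B$ is not $\mathbf{P_0}$. By (a), all proper principal minors of $B$ are nonnegative. Hence the only principal minor that can fail is $\det B$ itself, and $\det B<0$.

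For (c), take $\alpha=\{1,\dots,n-1\}$ and $B=A_{\alpha\alpha}$. By (b), $B$ is nonsingular. Since $B$ is almost semimonotone, Theorem~\ref{preliminary:cor5.4} gives $B^{-1}\le 0$. Write $u^T=A_{\bar\alpha\alpha}\le 0$ and $v=A_{\alpha\bar\alpha}\le 0$. The Schur complement is
\[
s=A/B=a_{nn}-u^TB^{-1}v .
\]
Here $a_{nn}\ge 0$ by Proposition~\ref{proposition:4.10}(a). Each term $u_i(B^{-1})_{ij}v_j$ is a product of three nonpositive numbers, so it is nonpositive, and therefore $s\ge a_{nn}\ge 0$. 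Once $s>0$ is established, Schur's formula $\det A=\det B\cdot s$ together with $\det B<0$ yields $\det A<0$.

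The main obstacle is the strict inequality $s>0$. By Proposition~\ref{proposition:4.10}(b), the last row and the last column of $A$ each contain at least two negative entries, all lying off the diagonal, so $u\neq 0$ and $v\neq 0$. It then suffices to show that $B^{-1}$ is entrywise negative, since then $u^TB^{-1}v<0$.

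To get $B^{-1}<0$, I would first prove that $B$ is irreducible. If $B$ were reducible, it would be permutationally block triangular with proper diagonal blocks. Each block is a proper principal submatrix, so its determinant is nonnegative by (a), and hence $\det B\ge 0$, contradicting (b). Then I would use $BB^{-1}=I$ with $B$ an irreducible $\mathbf{Z}$-matrix and $B^{-1}=-N$, $N\ge 0$.
\begin{itemize}
\item Suppose some column $N e_j$ has a zero pattern $Z$ that is nonempty and proper.
\item Look at the rows of $BN=-I$ indexed by $Z$ but different from $j$. In each such row the diagonal term vanishes because the corresponding entry of $N e_j$ is zero.
\item What remains is a sum of nonpositive off-diagonal entries of $B$ times nonnegative entries of $N$, equal to $0$. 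Every term is nonpositive, so each term must vanish.
\item Thus $B$ has no entries from rows in $Z\setminus\{j\}$ to columns outside $Z$.
\end{itemize}
This should contradict irreducibility, although the index $j$ needs a separate check. If this direct route is awkward, the fallback is the adjugate: I would show $(-1)\cdot$ each off-diagonal cofactor of $B$ is positive using irreducibility and (a).

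For (d), since $\det A\neq 0$, $A^{-1}$ exists and $(A^{-1})_{ii}=\det A(i)/\det A$, where $A(i)$ is the principal submatrix obtained by deleting row and column $i$. The numerator is negative by (b) and the denominator is negative by (c), so each diagonal entry of $A^{-1}$ is positive.
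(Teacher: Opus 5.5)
Your parts (a), (b) and (d) are fine. So are the bound $s\ge a_{nn}\ge 0$ and your argument that $B$ is irreducible.

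\textbf{The gap.} It sits exactly at the step you flagged. The claim that $B^{-1}$ is entrywise negative is false, so it cannot be what gives $s>0$. Since $(B^{-1})_{jj}=\det B(j)/\det B$, and (a) only gives $\det B(j)\ge 0$, the diagonal of $B^{-1}$ vanishes whenever an $(n-2)$-order principal minor does. For the paper's own example $A=\begin{bmatrix}0&-1&-1\\-2&0&-1\\-3&-4&0\end{bmatrix}$ you get $B=\begin{bmatrix}0&-1\\-2&0\end{bmatrix}$ and $B^{-1}=\begin{bmatrix}0&-1/2\\-1&0\end{bmatrix}$.

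This is precisely your unresolved case $j\in Z$. Your zero-pattern argument correctly shows that column $j$ of $N$ is positive when $N_{jj}>0$, but nothing more.

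The cofactor fallback does not close the gap either. It says nothing about the diagonal entries. Moreover, irreducibility together with nonnegative proper principal minors does not even force the off-diagonal entries of $B^{-1}$ to be nonzero. Take $P$ the permutation matrix of a $3$-cycle. Then $B=-P$ is an irreducible $\mathbf{Z}$-matrix with all proper principal minors equal to $0$ and $\det B=-1$, yet $B^{-1}=-P^{T}$ has zero off-diagonal entries.

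\textbf{The fix.} What you are missing is the freedom to choose which index to delete, which the paper uses. If some diagonal entry $a_{kk}$ is positive, delete $k$ instead of $n$. Your own nonstrict estimate then gives $s\ge a_{kk}>0$, and no strictness of $u^{T}B^{-1}v$ is needed.

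Real work is needed only when every diagonal entry of $A$ is zero, and there you need structural information about $B$. The paper uses the fact that a zero-diagonal almost semimonotone $\mathbf{Z}$-matrix is an irreducible monomial matrix, so each row and column of $B$ has exactly one negative entry. Proposition~\ref{proposition:4.10}(b) then forces $u<0$ and $v<0$ entrywise. Since $B^{-1}\le 0$ is nonzero, this gives $u^{T}B^{-1}v<0$. For $n=3$ this is immediate from Theorem~\ref{theorem:3x3structure}(a). With this case split in place, the rest of your proof goes through unchanged.
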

	
	\begin{proof}
		Suppose that \( A \in \mathbb{R}^{n \times n} \cap \mathbf{Z} \) is an \( \mathbf{E_{0_2}} \)-matrix. Then every principal submatrix of \( A \) of order \( n-2 \) is semimonotone, while no principal submatrix of order \( n-1 \) or \( n \) is semimonotone. By \cite[Theorem~7]{chu2003semimonotone}, every principal submatrix of \( A \) of order \( n-2 \) is a \( \mathbf{P_0} \)-matrix. Therefore, all proper principal minors of \( A \) of order \( \leq n-2 \) are nonnegative, proving part~$(a)$.
		
		To establish part~(b), let $A$ be a $\mathbf{Z}$-matrix that is an
		$\mathbf{E_{0_2}}$-matrix. By definition, every principal submatrix of $A$ of
		order $n-2$ is semimonotone, whereas no principal submatrix of $A$ of order
		$n-1$ is semimonotone. Since, under the $\mathbf{Z}$-matrix assumption, the class of semimonotone
		matrices coincides with the class of $\mathbf{P_0}$-matrices
		(see \cite[Theorem~7]{chu2003semimonotone}), it follows that every principal
		submatrix of $A$ of order $n-1$ fails to be a $\mathbf{P_0}$-matrix. Hence, all
		proper principal minors of $A$ of order $n-1$ are negative.
		
		We now prove part~(c). Let \( \alpha \subseteq \{1,\ldots,n\} \) with \( |\alpha| = n-1 \), and consider the partitioned matrix
		\[
		A =
		\begin{bmatrix}
			A_{\alpha\alpha} & A_{\alpha\bar{\alpha}} \\
			A_{\bar{\alpha}\alpha} & A_{\bar{\alpha}\bar{\alpha}}
		\end{bmatrix}.
		\]
		By the definition of an \( \mathbf{E_{0_2}} \)-matrix, the principal submatrix \( A_{\alpha\alpha} \) of \( A \) of order \( n-1 \) is almost semimonotone. Therefore, by  Theorem~\ref{preliminary:cor5.4}, the inverse $A_{\alpha\alpha}^{-1}$ exists
		and satisfies $A_{\alpha\alpha}^{-1} \le 0$. Moreover, by the structural form
		of $A$, we have $0 \neq A_{\bar{\alpha}\alpha} \le 0$ and
		$0 \neq A_{\alpha\bar{\alpha}} \le 0$. Since the product of nonpositive
		matrices is again nonnegative in each entry, it follows that
		\begin{equation}\label{eq:common_3.8theorem_equation}
			A_{\bar{\alpha}\alpha} A_{\alpha\alpha}^{-1} A_{\alpha\bar{\alpha}} \le 0.
		\end{equation}
		
		Since \( A_{\alpha\alpha}^{-1} \) exists, the Schur complement of \( A_{\alpha\alpha} \) in \( A \) is well-defined and given by
		\[
		A / A_{\alpha\alpha} = A_{\bar{\alpha} \bar{\alpha}} - A_{\bar{\alpha} \alpha} A_{\alpha\alpha}^{-1} A_{\alpha \bar{\alpha}}.
		\]
		
		We now show that \( A / A_{\alpha\alpha} > 0 \) by considering the following two cases. By Proposition~\ref{proposition:4.10}$(a)$, all diagonal entries of \( A \) are nonnegative. Therefore, all diagonal entries are zero, or at least one is positive.
		
		\medskip
		
		\noindent
		\textbf{Case (i):} All diagonal entries of \( A \) are zero. Then \( A_{\bar{\alpha} \bar{\alpha}} = 0 \), and \( A_{\alpha\alpha} \) is an almost semimonotone $\mathbf{Z}$-matrix with zero diagonal. By \cite[Theorem~4.1]{chauhan2024almost}, \( A_{\alpha\alpha} \) is an irreducible monomial matrix; that is, each row and each column of \( A_{\alpha\alpha} \) contains exactly one negative entry. Moreover, by Proposition~\ref{proposition:4.10}$(b)$, each row and each column of \( A \) contains at least two negative entries. This implies that $A_{\bar{\alpha}\alpha} < 0$ and
		$A_{\alpha\bar{\alpha}} < 0$ entrywise. Moreover,
		$A_{\alpha\alpha}^{-1} \le 0$ is nonzero, and each row of
		$A_{\alpha\alpha}^{-1}$ contains at least one strictly negative entry, it follows that
		\[
		A_{\bar{\alpha}\alpha} A_{\alpha\alpha}^{-1} A_{\alpha\bar{\alpha}} < 0.
		\]
		Since $A_{\bar{\alpha}\bar{\alpha}} = 0$, the Schur complement satisfies
		\begin{align*}
			A / A_{\alpha\alpha}
			&= A_{\bar{\alpha}\bar{\alpha}}
			- A_{\bar{\alpha}\alpha} A_{\alpha\alpha}^{-1} A_{\alpha\bar{\alpha}} \\
			&= - A_{\bar{\alpha}\alpha} A_{\alpha\alpha}^{-1} A_{\alpha\bar{\alpha}} \\
			&> 0.
		\end{align*}
		
		\noindent
		\textbf{Case (ii):} At least one diagonal entry of \( A \) is positive. Without loss of generality, suppose \( A_{\bar{\alpha} \bar{\alpha}} > 0 \). As previously established (see \eqref{eq:common_3.8theorem_equation}),
		\[
		A_{\bar{\alpha} \alpha} A_{\alpha\alpha}^{-1} A_{\alpha \bar{\alpha}} \leq 0,
		\]
		and hence
		\[
		A / A_{\alpha\alpha} = A_{\bar{\alpha} \bar{\alpha}} - A_{\bar{\alpha} \alpha} A_{\alpha\alpha}^{-1} A_{\alpha \bar{\alpha}} > 0.
		\]
		
		Combining both cases, we conclude that \( A / A_{\alpha\alpha} > 0 \). In particular,
		\[
		\det(A / A_{\alpha\alpha}) > 0.
		\]
		
		Additionally, since \( \det A_{\alpha\alpha} < 0 \), it follows from Schur’s determinantal formula that
		\[
		\det A = \det A_{\alpha\alpha} \cdot \det(A / A_{\alpha\alpha}) < 0.
		\]
		This completes the proof of part~(c).
		
		To establish part~(d), consider the diagonal entries of the adjugate matrix \( \mathrm{adj}(A) \), which are given by
		\[
		\left( \mathrm{adj}(A) \right)_{ii} = (-1)^{i+i} \det A_{\beta\beta} = \det A_{\beta\beta},
		\]
		where \( \beta = \{1, \ldots, n\} \setminus \{i\} \) for \( i = 1, 2, \dots, n \). Since \( \det A < 0 \), the matrix \( A \) is invertible. For any real, invertible matrix \( A \), the identity $\mathrm{adj}(A) = (\det A) A^{-1}$ implies that the diagonal entries of \( A^{-1} \) are given by
		\[
		(A^{-1})_{ii} = \frac{\det A_{\beta\beta}}{\det A}, \quad \text{for } i = 1, 2, \dots, n.
		\]
		Since \( \det A < 0 \) and \( \det A_{\beta\beta} < 0 \) for each \( \beta \), it follows that \( (A^{-1})_{ii} > 0 \) for all \( i = 1, 2, \dots, n \). Therefore, all diagonal entries of \( A^{-1} \) are positive.
		
		Moreover, from part~(c), since \( \det(A / A_{\alpha\alpha}) > 0 \), the Schur complement \( A / A_{\alpha\alpha} \) is invertible and satisfies \( (A / A_{\alpha\alpha})^{-1} > 0 \). Therefore, by \cite[Proposition~2.3.5]{cottle2009linear}, the inverse of \( A \) can be expressed as
		\[
		A^{-1} = \begin{bmatrix}
			A_{\alpha\alpha}^{-1} + A_{\alpha\alpha}^{-1} A_{\alpha \bar{\alpha}} (A / A_{\alpha\alpha})^{-1} A_{\bar{\alpha} \alpha} A_{\alpha\alpha}^{-1} & -A_{\alpha\alpha}^{-1} A_{\alpha \bar{\alpha}} (A / A_{\alpha\alpha})^{-1} \\
			-(A / A_{\alpha\alpha})^{-1} A_{\bar{\alpha} \alpha} A_{\alpha\alpha}^{-1} & (A / A_{\alpha\alpha})^{-1}
		\end{bmatrix}.
		\]
		
		From this expression, we observe that
		\[
		-(A / A_{\alpha\alpha})^{-1} A_{\bar{\alpha} \alpha} A_{\alpha\alpha}^{-1} \leq 0 \quad \text{and} \quad -A_{\alpha\alpha}^{-1} A_{\alpha \bar{\alpha}} (A / A_{\alpha\alpha})^{-1} \leq 0.
		\]
		Additionally, the matrix \( (A / A_{\alpha\alpha})^{-1} \), which appears as a diagonal block in \( A^{-1} \), is strictly positive.
	\end{proof}
	
	\begin{remark}
		We observe from Theorem~\ref{theorem:4.11} that for any \(\mathbf{Z} \cap \mathbf{E_{0_2}} \)-matrix, the Schur complement of \( A_{\alpha\alpha} \) in \( A \) exists and is positive for every \( \alpha = \{1, \ldots, n\} \setminus \{i\} \), where \( i = 1, 2, \dots, n \).
	\end{remark}
	
	\begin{example}
		Consider the matrix
		\[
		A =
		\begin{bmatrix}
			1 & \frac{1}{2} & -1 & -1 \\
			\frac{1}{3} & 1 & -1 & -1 \\
			-1 & -1 & 1 &  0 \\
			-1 & -1 & 0 & 1
		\end{bmatrix} \quad \text{and} \quad
		A^{-1} =
		\begin{bmatrix}
			\frac{2}{3} & -1 & -\frac{1}{3} & -\frac{1}{3} \\
			-\frac{10}{9} & \frac{2}{3} & -\frac{4}{9} & -\frac{4}{9} \\
			-\frac{4}{9} & -\frac{1}{3} & \frac{2}{9} & -\frac{7}{9} \\
			-\frac{4}{9} & -\frac{1}{3} & -\frac{7}{9} & \frac{2}{9}
		\end{bmatrix}.
		\]
		Here, \( A \) belongs to the class \( \mathbf{E_{0_2}} \). Note that \( A \) is neither symmetric nor a \( \mathbf{Z} \)-matrix. However, the inverse \( A^{-1} \) is a \( \mathbf{Z} \)-matrix.
	\end{example}
	
	\vspace{0.5em}
	Motivated by numerous examples and previous observations, we propose the following conjectures:
	
	\begin{conjecture}\label{conjecture:1}
		Let \( A \in \mathbb{R}^{n \times n} \cap \mathbf{Z} \) be an \( \mathbf{E_{0_2}} \)-matrix. Then, the leading principal diagonal block of $A^{-1}:$
		\[
		A_{\alpha\alpha}^{-1} + A_{\alpha\alpha}^{-1} A_{\alpha \bar{\alpha}} (A / A_{\alpha\alpha})^{-1} A_{\bar{\alpha} \alpha} A_{\alpha\alpha}^{-1},
		\]
		as described in the proof of Theorem~\ref{theorem:4.11}, is a \( \mathbf{Z} \)-matrix. Moreover, \(A\) has exactly one negative eigenvalue.
	\end{conjecture}
	
	\begin{conjecture}
		Let \( A \in \mathbb{R}^{n \times n} \) be an \( \mathbf{E_{0_2}} \)-matrix. Then, \( \det(A) < 0 \), \( A^{-1} \) exists and is a \( \mathbf{Z} \)-matrix.
	\end{conjecture}
	
	\vspace{0.5em}
	If the above conjectures hold, then the class of semimonotone matrices of exact order two (\( \mathbf{E_{0_2}} \)) forms a subclass of inverse \( \mathbf{Z} \)-matrices.
	
	
	\section{Semimonotone Matrices of Exact Order k}\label{generalresult}
	
	This section examines semimonotone matrices of exact order \( k \) in \( \mathbb{R}^{n\times n} \), a class of matrices for which explicit constructions become increasingly complex as \( k \) grows. We present representative examples for \( k = 2 \) and \( k = 3 \), which also provide insight into the structure of \( \mathbf{E_{0_k}} \)-matrices. 
	These examples motivate the observations made in the subsequent remark.
	
	\begin{example}
		\begin{itemize}
			\item[(a)] Semimonotone matrices of exact order \( 3 \):
			\[
			A = \begin{bmatrix}
				1 & -2 & -3 & -2 \\
				-3 & 1 & -2 & -3 \\
				-2 & -3 & 1 & -4 \\
				-3 & -2 & -3 & 1
			\end{bmatrix}, \quad
			B = \begin{bmatrix}
				1 & -\tfrac{1}{2} & -1 & -1 & -1 \\
				0 & 1 & -1 & -1 & -1 \\
				-1 & -1 & 1 & -1 & -1 \\
				-1 & -1 & -1 & 1 & 0 \\
				-1 & -1 & -1 & 0 & 1
			\end{bmatrix}.
			\]
			
			\item[(b)] Semimonotone matrices of exact order \( 2 \):
			\[
			A = \begin{bmatrix}
				1 & \tfrac{1}{2} & -1 & -1 \\
				0 & 1 & -1 & -1 \\
				-1 & -1 & 1 & 0 \\
				-1 & -1 & 0 & 1
			\end{bmatrix}, \quad
			B = \begin{bmatrix}
				1 & 0 & 0 & -1 & -1 \\
				-1 & 2 & 0 & 0 & -1 \\
				-1 & -\tfrac{1}{2} & 1 & 0 & 0 \\
				0 & -1 & -1 & 1 & 0 \\
				0 & -\tfrac{1}{2} & -\tfrac{1}{2} & -1 & 1
			\end{bmatrix}.
			\]
		\end{itemize}
		
		Each matrix is semimonotone of the specified exact order and is invertible with negative determinant. 
		In all cases, the inverse is a \( \mathbf{Z} \)-matrix with strictly positive diagonal entries. 
		Moreover, each matrix has exactly one negative eigenvalue.
	\end{example}
	
	\begin{remark}
		Motivated by the above examples and observations, we pose the following
		questions.
		
		Let $n \ge 3$ and $1 \le k < n$, and let $A \in \mathbb{R}^{n \times n}$.
		
		\begin{itemize}
			\item Is it true that if $A \in \mathbf{E_{0_k}}$ (resp., $\mathbf{E_k}$), then
			each row and each column of $A$ contains at least $k$ negative entries?
			
			\item Is it true that if $A \in \mathbf{E_{0_k}}$, then $\det A < 0$ and $A^{-1}$
			exists and is a $\mathbf{Z}$-matrix?
		\end{itemize}
	\end{remark}
	
	\begin{remark}
		One may ask whether the class \( \mathbf{E_{0_k}} \) is closed under matrix addition or multiplication. The following example shows that this is not the case.
		
		\[
		A = \begin{bmatrix}
			10 & -1 & -1 \\
			-1 & 0 & -1 \\
			-1 & -1 & 0
		\end{bmatrix}, \quad 
		B = \begin{bmatrix}
			1 & -2 & -2 \\
			-2 & 1 & -2 \\
			-2 & -2 & 1
		\end{bmatrix}.
		\]
		
		Both \( A \) and \( B \) are \( \mathbf{E_{0_2}} \)-matrices. However, neither their sum nor their product belongs to \( \mathbf{E_{0_2}} \):
		\[
		A + B = 
		\begin{bmatrix}
			11 & -3 & -3 \\
			-3 & 1 & -3 \\
			-3 & -3 & 1
		\end{bmatrix}, \quad 
		AB = 
		\begin{bmatrix}
			14 & -19 & -19 \\
			1 & 4 & 1 \\
			1 & 1 & 4
		\end{bmatrix}.
		\]
	\end{remark}
	
	\begin{remark}
		It is known that the sum of a semimonotone matrix and a nonnegative matrix is again semimonotone \cite[Theorem 4.1]{tsatsomeros2019semimonotone}. However, a similar result does not hold for the class of \( \mathbf{E_{0_k}} \)-matrices, as demonstrated below.
		
		\[
		A = \begin{bmatrix}
			0 & -1 & -1 \\
			-1 & 0 & -1 \\
			-1 & -1 & 0
		\end{bmatrix}, \quad 
		B = \begin{bmatrix}
			0 & 1 & 0 \\
			0 & 0 & 0 \\
			0 & 0 & 0
		\end{bmatrix}.
		\]
		
		Here, \( A \) is an \( \mathbf{E_{0_2}} \)-matrix and \( B \) is a nonnegative matrix. However, their sum
		\[
		A + B = 
		\begin{bmatrix}
			0 & 0 & -1 \\
			-1 & 0 & -1 \\
			-1 & -1 & 0
		\end{bmatrix}
		\]
		is not an \( \mathbf{E_{0_2}} \)-matrix.
	\end{remark}
	
	Next, we establish that the class of (strictly) semimonotone matrices of exact order \( k \) is closed under transposition and permutation similarity. This invariance ensures that the structural properties characterizing such matrices are preserved under reordering of rows and columns, allowing for flexible analysis and computational approaches.
	
	\begin{proposition}\label{proposition:transpose}
		Let \( A \in \mathbb{R}^{n \times n} \). Then the following statements hold:
		\begin{itemize}
			\item[(a)] \( A \in \mathbf{E_{0_k}} \) (resp., \( \mathbf{E_k} \)) if and only if \( A^T \in \mathbf{E_{0_k}} \) (resp., \( \mathbf{E_k} \)).
			
			\item[(b)] Let \( P \in \mathbb{R}^{n \times n} \) be a permutation matrix. Then \( A \in \mathbf{E_{0_k}} \) (resp., \( \mathbf{E_k} \)) if and only if \( PAP^T \in \mathbf{E_{0_k}} \) (resp., \( \mathbf{E_k} \)).
		\end{itemize}
	\end{proposition}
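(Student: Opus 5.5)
The plan is to reduce both parts to the corresponding invariance of plain (strictly) semimonotone matrices, and then apply that fact to every principal submatrix. Definition~\ref{definition:exact_order_k_matrix} involves only which principal submatrices of order $n-k$ are (strictly) semimonotone and which of order $r>n-k$ are not. So it suffices to show two things. First, the principal submatrices of $A^T$ (resp.\ $PAP^T$) are, up to the operation, the principal submatrices of $A$ of the same order. Second, a matrix $M$ is (strictly) semimonotone if and only if $M^T$ is, and if and only if $QMQ^T$ is, for any permutation matrix $Q$.

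For part (b), let $P$ correspond to a permutation $\sigma$, so that $(PAP^T)_{ij}=a_{\sigma(i)\sigma(j)}$. Then for each index set $\alpha$, the principal submatrix $(PAP^T)_{\alpha\alpha}$ equals $Q A_{\sigma(\alpha)\sigma(\alpha)} Q^T$ for a suitable permutation matrix $Q$ of order $|\alpha|$, and $\alpha\mapsto\sigma(\alpha)$ is a bijection preserving cardinality. Semimonotonicity is invariant under $M\mapsto QMQ^T$. Indeed, if $0\neq x\ge 0$, then $y=Q^Tx$ satisfies $0\ne y\ge0$, and $(QMQ^Tx)=Q(My)$, so the coordinate witnessing $y_j>0$, $(My)_j\ge0$ (resp.\ $>0$) transfers to the corresponding coordinate of $x$. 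Hence the order-$(n-k)$ and order-$r$ conditions hold for $A$ exactly when they hold for $PAP^T$. The converse follows by applying the same argument with $P^T$.

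For part (a), observe that $(A^T)_{\alpha\alpha}=(A_{\alpha\alpha})^T$, so it is enough to know that $M\in\mathbf{E_0}$ (resp.\ $\mathbf{E}$) if and only if $M^T\in\mathbf{E_0}$ (resp.\ $\mathbf{E}$). This step is the main obstacle, because it is not immediate from the definition. I would use the classical characterization: $M$ is semimonotone if and only if no principal submatrix $M_{\beta\beta}$ admits $x>0$ with $M_{\beta\beta}x<0$. Correspondingly, $M$ is strictly semimonotone if and only if no $M_{\beta\beta}$ admits $x>0$ with $M_{\beta\beta}x\le 0$. The ``only if'' direction is exactly the definition restricted to the support of $x$, and the ``if'' direction follows by restricting to $\beta=\operatorname{supp}(x)$.

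By Ville's theorem of the alternative (a Gordan-type theorem), the system $x>0,\ Bx<0$ has no solution if and only if there exists $0\ne y\ge0$ with $B^Ty\ge 0$. The remaining work is an induction on the order, together with a choice of $\beta$ of minimal cardinality, to convert this into the same characterization for $M^T$. Since this is a known result (see \cite{cottle2009linear}, where $\mathbf{E_0}$ and $\mathbf{E}$ are shown to be closed under transposition), I would cite it rather than reprove it. With it in hand, part (a) follows, and the observation used in Proposition~\ref{proposition:4.10} is justified.
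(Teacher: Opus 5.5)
Your proposal is correct and follows the paper's proof. Both arguments reduce the statement to two facts: the principal submatrices of $A^T$ (resp.\ $PAP^T$) are transposes (resp.\ permutation similarities) of principal submatrices of $A$ of the same order, and $\mathbf{E_0}$ and $\mathbf{E}$ are invariant under these operations. The paper cites \cite[Theorem~3.1(6) and Theorem~4.3]{tsatsomeros2019semimonotone} for both invariances, whereas you verify the permutation case directly.

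Your Ville's-theorem sketch for the transposition step is not carried out, and the strict case would need a Motzkin-type alternative rather than Ville's. So, exactly as in the paper, part (a) rests on the cited transposition result, for which \cite[Theorem~3.1(6)]{tsatsomeros2019semimonotone} is the reference the paper uses.
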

	
	\begin{proof}
		(a) Suppose \( A \in \mathbf{E_{0_k}} \) (resp., \( \mathbf{E_k} \)). Then every principal submatrix of \( A \) of order \( n-k \) is (strictly) semimonotone, and \( A \) and all its principal submatrices of order \( r > n-k \) are not (strictly) semimonotone. Since the transpose of a principal submatrix of \( A \) is equal to the principal submatrix of \( A^T \), it follows from \cite[Theorem 3.1(6)]{tsatsomeros2019semimonotone} that every principal submatrix of \( A^T \) of order \( n - k \) is (strictly) semimonotone, and none of order \( r > n - k \) is. Hence, \( A^T \in \mathbf{E_{0_k}} \) (resp., \( \mathbf{E_k} \)). The converse follows since \( A = (A^T)^T \).
		
		\vspace{1ex}
		(b) Let \( P \in \mathbb{R}^{n \times n} \) be a permutation matrix and suppose \( A \in \mathbf{E_{0_k}} \) (resp., \( \mathbf{E_k} \)). Then every principal submatrix of \( A \) of order \( n-k \) is (strictly) semimonotone, and none of order \( r > n-k \) is.
		
		Permutation similarity via \( P \) reorders the rows and columns of \( A \) without altering its structural properties. 
		In particular, for any principal submatrix of \( A \) formed by selecting rows and columns indexed by a set \( \alpha \subseteq \{1, \dots, n\} \), there exists a corresponding principal submatrix of \( PAP^T \), formed by selecting rows and columns indexed according to the permuted positions of \( \alpha \), 
		which is permutation similar to \( A_{\alpha\alpha} \). 
		Thus, the semimonotonicity properties of the principal submatrices are preserved under permutation similarity (see \cite[Theorem~4.3]{tsatsomeros2019semimonotone}), and it implies that \( PAP^T \in \mathbf{E_{0_k}} \) (resp., \( \mathbf{E_k} \)). 
		
		Conversely, if \( PAP^T \in \mathbf{E_{0_k}} \) (resp., \( \mathbf{E_k} \)), then since \( A = P^T (PAP^T) P \), the same reasoning applies, and we conclude \( A \in \mathbf{E_{0_k}} \) (resp., \( \mathbf{E_k} \)).
	\end{proof}
	
	The following result shows that the class of (strictly) semimonotone matrices of exact order \( k \) is invariant under scaling by a diagonal matrix with strictly positive diagonal entries.
	
	\begin{proposition}
		Let \( A \in \mathbb{R}^{n \times n} \), and let \( D \) be a diagonal matrix of order \( n \) with strictly positive diagonal entries. Then the following statements are equivalent:
		\begin{itemize}
			\item[(a)] \( A \in \mathbf{E_{0_k}} \) (resp., \( \mathbf{E_k} \)).
			\item[(b)] \( DA \in \mathbf{E_{0_k}} \) (resp., \( \mathbf{E_k} \)).
			\item[(c)] \( AD \in \mathbf{E_{0_k}} \) (resp., \( \mathbf{E_k} \)).
		\end{itemize}
	\end{proposition}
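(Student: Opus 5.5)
The plan is to reduce everything to the known invariance of (strictly) semimonotone matrices under positive diagonal scaling, applied principal submatrix by principal submatrix. Let $D = \mathrm{diag}(d_1,\dots,d_n)$ with all $d_i > 0$.

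\emph{Step 1 (submatrices).} For every index set $\alpha \subseteq \{1,\dots,n\}$ we have
\[
(DA)_{\alpha\alpha} = D_{\alpha\alpha} A_{\alpha\alpha}, \qquad (AD)_{\alpha\alpha} = A_{\alpha\alpha} D_{\alpha\alpha},
\]
and $D_{\alpha\alpha}$ is again a positive diagonal matrix. This is immediate because $D$ is diagonal: the $(i,j)$ entry of $DA$ is $d_i a_{ij}$, and that of $AD$ is $a_{ij} d_j$.

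\emph{Step 2 (scaling lemma).} We show that for any square $M$ and any positive diagonal $E$ of the same size, the following are equivalent: $M$ is (strictly) semimonotone; $EM$ is; $ME$ is. This may also be cited from \cite{tsatsomeros2019semimonotone}, but a direct proof is short.
\begin{itemize}
\item For $EM$: given $0 \neq x \geq 0$, we have $(EMx)_i = e_i (Mx)_i$ with $e_i > 0$, so the sign of each component is unchanged. The defining condition (some $i$ with $x_i > 0$ and $(Mx)_i \geq 0$, resp.\ $>0$) therefore transfers verbatim.
\item For $ME$: substitute $y = Ex$. The map $x \mapsto Ex$ is a bijection of the nonzero nonnegative vectors onto themselves and preserves supports, since $x_i > 0 \iff y_i > 0$. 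As $MEx = My$, the condition for $ME$ at $x$ is exactly the condition for $M$ at $y$.
\item The converse directions follow by applying the same arguments with $E^{-1}$, which is also positive diagonal.
\end{itemize}

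\emph{Step 3 (assemble).} Combining Steps 1 and 2, for each $\alpha$ the submatrix $A_{\alpha\alpha}$ is (strictly) semimonotone if and only if $(DA)_{\alpha\alpha}$ is, if and only if $(AD)_{\alpha\alpha}$ is. The definition of $\mathbf{E_{0_k}}$ (resp.\ $\mathbf{E_k}$) depends only on which principal submatrices of orders $n-k$ through $n$ are (strictly) semimonotone, and the full matrix is the case $\alpha = \{1,\dots,n\}$. Hence (a)$\iff$(b) and (a)$\iff$(c).

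There is no real obstacle. The only point needing care is the $AD$ direction, where one must note that the change of variables $y = Dx$ preserves both nonnegativity and the support of $x$. Only then does the index-wise condition "$x_i > 0$ and $(Ax)_i \geq 0$" transfer correctly.
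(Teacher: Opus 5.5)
Your proof is correct and follows the same plan as the paper: reduce to principal submatrices via $(DA)_{\alpha\alpha}=D_{\alpha\alpha}A_{\alpha\alpha}$, then use invariance of (strict) semimonotonicity under positive diagonal scaling. There are two small differences. You prove that invariance directly instead of citing Theorem~4.6 of Tsatsomeros--Wendler. You also handle $AD$ by the substitution $y=Dx$, whereas the paper goes through $(AD)^T=DA^T$ and closure of the class under transposition. Your version is therefore self-contained and does not depend on the less elementary fact that semimonotonicity is invariant under transposition.
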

	
	\begin{proof}
		\noindent
		\((a) \Rightarrow  (b).\) Suppose \( A \in \mathbf{E_{0_k}} \) (resp., \( \mathbf{E_k} \)). Then all principal submatrices of \( A \) of order \( n-k \) are (strictly) semimonotone, while \( A \) itself and all its principal submatrices of order \( r > n-k \) are not (strictly) semimonotone. 
		
		Since the principal submatrix of the matrix \( DA \) corresponding to any index set is equal to the product of the corresponding principal submatrices of \( D \) and \( A \), it follows from \cite[Theorem 4.6]{tsatsomeros2019semimonotone} that every principal submatrix of \( DA \) of order \( n - k \) is (strictly) semimonotone, and none of order \( r > n - k \) is. Hence, \( DA \in \mathbf{E_{0_k}} \) (resp., \( \mathbf{E_k} \)).
		
		\vspace{1ex}
		\noindent
		\((b) \Rightarrow  (a).\) Since \( D \) is an invertible diagonal matrix with strictly positive diagonal entries, we can write \( A = D^{-1} (DA) \). Applying the result of part $(a)$, the claim follows.
		
		\vspace{1ex}
		\noindent
		\((b) \Leftrightarrow  (c).\) This equivalence follows directly from Proposition~\ref{proposition:transpose}$(a)$, since \( (AD)^T = D A^{T} \), and the class \( \mathbf{E_{0_k}} \) (resp., \( \mathbf{E_k} \)) is closed under transposition.
	\end{proof}
	
	In the following result, we show that \( n\times n \) (strictly) semimonotone matrices of exact order \( k \) always have nonnegative diagonal entries (resp., positive). 
	Furthermore, we show that, under the additional condition \( n = k + 1 \), such matrices form a subclass of \( \mathbf{Z} \)-matrices.
	
	\begin{theorem}
		Let \( n \geq 3 \) and \( 1 \leq k < n \), and let \( A \in \mathbb{R}^{n \times n} \). Then the following statements hold:
		\begin{itemize}
			\item[(a)] If \( A \in \mathbf{E_{0_k}} \) (resp., \( \mathbf{E_k} \)), then all diagonal entries of \( A \) are nonnegative (resp., positive).
			
			\item[(b)] If \( A \in \mathbf{E_{0_k}} \) (resp., \( \mathbf{E_k} \)) and \( n = k + 1 \), then all off-diagonal entries of \( A \) are negative; that is, \( A \) is a \( \mathbf{Z} \)-matrix.
		\end{itemize}
	\end{theorem}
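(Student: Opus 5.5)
Part~(a) comes from the definition alone. If \(A \in \mathbf{E_{0_k}}\) (resp., \(\mathbf{E_k}\)) with \(1 \le k < n\), then \(n-k \ge 1\), so every principal submatrix of order \(n-k\) is (strictly) semimonotone. A principal submatrix of a (strictly) semimonotone matrix is again (strictly) semimonotone, since one can test with vectors supported on the corresponding index set. Hence every \(1\times 1\) principal submatrix \([a_{ii}]\) is (strictly) semimonotone. Taking \(x = 1\) in the definition gives \(a_{ii} \ge 0\) (resp., \(a_{ii} > 0\)). This argument is the same as the one in Proposition~\ref{proposition:4.10}(a).

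For part~(b), set \(n = k+1\). Then all principal submatrices of order \(1\) are (strictly) semimonotone, while no principal submatrix of order \(r\) with \(2 \le r \le n\) is (strictly) semimonotone.

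Fix distinct indices \(i,j\) and consider the \(2\times 2\) principal submatrix \(A_{\{i,j\}\{i,j\}}\). It is not (strictly) semimonotone, but both of its proper principal submatrices, \([a_{ii}]\) and \([a_{jj}]\), are. So it is almost (strictly) semimonotone, in the sense used in the proof of Theorem~\ref{theorem:3x3structure}: a non-(strictly) semimonotone matrix whose proper principal submatrices are all (strictly) semimonotone.

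Theorem~\ref{preliminary:theo3.3} then says this submatrix has the form \(\begin{bmatrix} a & -b \\ -c & d\end{bmatrix}\) with \(b,c>0\). Therefore \(a_{ij} < 0\) and \(a_{ji} < 0\). Since \(i,j\) were arbitrary, every off-diagonal entry of \(A\) is negative, and so \(A\) is a \(\mathbf{Z}\)-matrix.

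The only point needing care is identifying non-(strictly) semimonotone plus all proper principal submatrices (strictly) semimonotone with the paper's formal definition of almost (strictly) semimonotone, which asks for a vector \(x>0\) with \(Ax\le 0\) (resp., \(Ax<0\)). I would handle this directly in the \(2\times 2\) case. Since the submatrix fails to be (strictly) semimonotone, some \(0\ne x\ge 0\) has \((Ax)_l<0\) (resp., \(\le 0\)) for every \(l\) in the support of \(x\). If the support were a single index, this would contradict the sign of that diagonal entry. So \(x>0\), and \(Ax<0\) (resp., \(Ax\le 0\)) on both coordinates.

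This direct argument also gives the needed signs without the theorem. With \(a_{ii}\ge 0\) and \(x_i>0\), the inequality \(a_{ii}x_i + a_{ij}x_j < 0\) forces \(a_{ij}<0\). In the strict case the inequality is \(a_{ii}x_i + a_{ij}x_j \le 0\), and \(a_{ii}>0\) again forces \(a_{ij}<0\). Symmetrically \(a_{ji}<0\). So part~(b) follows even without invoking Theorem~\ref{preliminary:theo3.3}.
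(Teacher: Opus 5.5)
Your proof is correct and is essentially the paper's proof. Part (a) follows from the $1\times 1$ principal submatrices, and part (b) follows by noting that for $n=k+1$ every $2\times 2$ principal submatrix is almost (strictly) semimonotone and applying Theorem~\ref{preliminary:theo3.3}.

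Your supplementary direct argument is also valid: the witness $x$ must be positive, so $a_{ii}x_i+a_{ij}x_j<0$ (resp., $\le 0$) forces $a_{ij}<0$. This makes part (b) self-contained. The condition it yields, $Ax<0$ (resp., $Ax\le 0$), is the standard one; the paper's preliminary definition of almost (strictly) semimonotone has these two inequalities interchanged, so your caution there was justified.
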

	
	\begin{proof}
		Suppose \( A \in \mathbb{R}^{n \times n} \) is an \( \mathbf{E_{0_k}} \) (resp., \( \mathbf{E_k} \))-matrix. Then every principal submatrix of \( A \) of order $n-k$ is (strictly) semimonotone. Note that $n-k \geq 1$ as $k < n$. Therefore, all principal submatrices of \( A \) of order $1$ are (strictly) semimonotone, which implies that all diagonal entries of $A$ are (positive) nonnegative. This proves part $(a)$.  
		
		Next, when \( n = 1 + k \) and \( A \in \mathbb{R}^{n \times n} \) is an \( \mathbf{E_{0_k}} \) (resp., \( \mathbf{E_k} \))-matrix. Then every principal submatrix of \( A \) of order $n-k$ is (strictly) semimonotone, but no principal submatrix of order \( r \), where \( 1 = n-k < r \leq n \), is (strictly) semimonotone. It implies that every principal submatrix of \( A \) of order $2$ is almost (strictly) semimonotone. Theorem~\ref{preliminary:theo3.3} shows that each principal submatrix of \( A \) of order $2$ has negative off-diagonal entries. Since each off-diagonal entry of $A$ is an off-diagonal entry of some principal submatrix of $A$ of order $2$. Therefore, all off-diagonal entries of \( A \) are negative. In this case, $A$ is a $\mathbf{Z}$-matrix. This proves part $(b).$
	\end{proof}
	
	\begin{remark}
		As noted in \cite[Theorem 3.1(3)]{tsatsomeros2019semimonotone}, every (strictly) copositive matrix is (strictly) semimonotone. This raises the question: Is this implication preserved when restricted to matrices of general exact order $k$?
		
		The following example shows that the answer is negative.
	\end{remark}
	
	\begin{example}
		Consider the matrices
		\[
		A =
		\begin{bmatrix}
			0 & -1 & -1\\
			0 & 0 & -1 \\
			0 & 0 & 0
		\end{bmatrix} \quad \text{and} \quad
		B =
		\begin{bmatrix}
			1 & -3 & -3\\
			0 & 1 & -3 \\
			0 & 0 & 1
		\end{bmatrix}.
		\]
		The matrix \( A \) is copositive of exact order two but not semimonotone of exact order two, while the matrix \( B \) is strictly copositive of exact order two but not strictly semimonotone of exact order two.
	\end{example}
	
	However, under the symmetry assumption, we now establish that copositivity and semimonotonicity coincide in the case of exact order $k$.
	
	\begin{theorem}
		Let \( A \in \mathbb{R}^{n \times n} \) be a symmetric matrix. Then, the class of (strictly) copositive matrices of exact order $k$ is identical to the class of (strictly) semimonotone matrices of exact order $k$.
	\end{theorem}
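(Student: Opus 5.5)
The plan is to reduce the statement to the known fact that, for symmetric matrices, (strictly) copositive and (strictly) semimonotone coincide. Both classes "of exact order $k$" are defined purely through which principal submatrices of a given order lie in a base class. So it suffices to show that for every principal submatrix $A_{\alpha\alpha}$ of the symmetric matrix $A$, $A_{\alpha\alpha}$ is (strictly) copositive if and only if it is (strictly) semimonotone.

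First observe that any principal submatrix of a symmetric matrix is symmetric, since $(A_{\alpha\alpha})^T=(A^T)_{\alpha\alpha}=A_{\alpha\alpha}$. Hence it suffices to prove the base equivalence for an arbitrary symmetric matrix $M\in\mathbb{R}^{m\times m}$. One direction is immediate from \cite[Theorem 3.1(3)]{tsatsomeros2019semimonotone}: (strictly) copositive implies (strictly) semimonotone.

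For the converse, take $M$ symmetric and semimonotone, and argue by induction on $m$ via the minimal-counterexample structure. Suppose $M$ is not copositive. Choose a principal submatrix $M_{\beta\beta}$ of minimal size that is not copositive; it is symmetric, and all its proper principal submatrices are copositive. By the classical characterization of symmetric almost copositive matrices (Cottle–Habetler–Lemke / Väliaho), there exists $x>0$ with $M_{\beta\beta}x<0$; in fact $M_{\beta\beta}^{-1}\le 0$ and one takes $x=-M_{\beta\beta}^{-1}e$ suitably. Such an $x$ shows $M_{\beta\beta}$ is not semimonotone, and padding $x$ with zeros shows $M$ is not semimonotone, a contradiction.

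The strict case is the same: a minimal non-strictly-copositive symmetric submatrix admits $x>0$ with $M_{\beta\beta}x\le 0$, contradicting strict semimonotonicity. Alternatively, if the literature result "symmetric $\Rightarrow$ ($\mathbf{E_0}\iff$ copositive, $\mathbf{E}\iff$ strictly copositive)" from \cite{tsatsomeros2019semimonotone} or \cite{cottle2009linear} is citable, I would simply invoke it.

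Having the base equivalence for every principal submatrix, compare the two definitions with $k$ fixed. $A$ is (strictly) copositive of exact order $k$ iff all order-$(n-k)$ principal submatrices are (strictly) copositive and none of order $r>n-k$ is. By the equivalence this is word-for-word the same as every order-$(n-k)$ principal submatrix being (strictly) semimonotone and none of order $r>n-k$ being so, i.e. $A\in\mathbf{E_{0_k}}$ (resp.\ $\mathbf{E_k}$).

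The main obstacle is the converse direction of the base equivalence, specifically producing the positive vector $x$ with $M_{\beta\beta}x<0$ (resp.\ $\le 0$) for a minimally non-copositive symmetric matrix. I would first try to cite Väliaho's/Cottle–Habetler–Lemke's theorem on almost copositive matrices. Failing that, I would use a direct argument: minimize $x^TMx$ over the simplex, take a minimizer $x^*$ in the relative interior of the face indexed by $\beta$, and apply KKT to get $M_{\beta\beta}x^*_\beta=\lambda e$ with $\lambda=x^{*T}Mx^*<0$ (resp.\ $\le0$).
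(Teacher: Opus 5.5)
Your proposal is correct and takes essentially the same route as the paper. Both arguments reduce the exact-order statement to the fact that, for a symmetric matrix (and hence for every principal submatrix of $A$), (strictly) copositive and (strictly) semimonotone coincide; after that, the two exact-order definitions agree word for word. The only difference is that the paper simply cites Cottle--Pang--Stone for the symmetric converse, while you also justify it yourself, via Cottle--Habetler--Lemke or via the KKT argument on the simplex giving $M_{\beta\beta}x^*_\beta=\lambda e$ with $\lambda=x^{*T}Mx^*<0$ (resp.\ $\le 0$), and that justification is sound.
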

	
	\begin{proof}
		Let \( A \in \mathbb{R}^{n \times n} \) be a symmetric (strictly) copositive matrix of exact order \(k\). Then, every principal submatrix of \( A \) of order \( n-k \) is (strictly) copositive. Since every (strictly) copositive matrix is also (strictly) semimonotone \cite[Theorem 3.1(3)]{tsatsomeros2019semimonotone}, it follows that each such submatrix is (strictly) semimonotone. Furthermore, since \( A \) is (strictly) copositive of exact order \(k\), no principal submatrix of order \( r \), where \( n - k < r \leq n \), is (strictly) copositive. Under symmetry, this implies that these submatrices are also not (strictly) semimonotone (see \cite[Propositions 3.9.8 and 3.9.14]{cottle2009linear}). Hence, \( A \) is (strictly) semimonotone of exact order \(k\). The reverse inclusion follows by a similar argument.
	\end{proof}
	
	
	\section*{Declaration of Competing Interest}	
	The authors declare no competing interests.
	
	\section*{Data Availability}
	No data was used in this research.

	\section*{Acknowledgements}
	
	The authors thank Prof.\ K.\ C.\ Sivakumar (Department of Mathematics, Indian Institute of Technology Madras) for carefully reading an earlier draft of this manuscript and for providing valuable comments. The authors also thank the anonymous referees and the Handling Editor for their constructive suggestions, which helped improve the paper's presentation. The first author was supported by the IIT Gandhinagar Post-Doctoral Fellowship (Project No.\ IP/IP/50020).
	

\end{document}